\newtheorem{theorem}{Theorem}[section]
\newtheorem{proposition}[theorem]{Proposition}
\newtheorem{lemma}[theorem]{Lemma}
\journal{Journal of Approximation Theory}
\begin{document}

\begin{frontmatter}



\title{Laguerre-Angelesco multiple orthogonal polynomials \\ on an $r$-star}


\author{Marjolein Leurs, Walter Van Assche}

\address{Department of Mathematics, KU Leuven, Celestijnenlaan 200B box 2400, BE-3001 Leuven, Belgium}
\ead{Marjolein.Leurs@kuleuven.be, Walter.VanAssche@kuleuven.be}

\begin{abstract}
We investigate the type I and type II multiple orthogonal polynomials on an $r$-star with weight function $|x|^{\beta}e^{-x^r}$, with $\beta>-1$. Each measure $\mu_j$, for $1\leq j \leq r$, is supported on the semi-infinite interval $[0,\omega^{j-1}\infty)$ with $\omega=e^{2\pi i/r}$. For both the type I and the type II polynomials we give explicit expressions, the coefficients in the recurrence relation, the differential equation and we obtain the asymptotic zero distribution of the polynomials on the diagonal. Also, we give the connection between the Laguerre-Angelesco polynomials and the Jacobi-Angelesco polynomials on an $r$-star.
\end{abstract}

\begin{keyword}
Multiple orthogonal polynomials \sep Jacobi-Angelesco polynomials \sep Laguerre-Angelesco polynomials \sep recurrence relation \sep differential equation \sep asymptotic zero distribution. 

\MSC 33C45 \sep 42C05 \sep 41A28
\end{keyword}

\end{frontmatter}


\section{Introduction}

Multiple orthogonal polynomials are a generalization of orthogonal polynomials where we consider polynomials that are orthogonal with respect to a system of measures $(\mu_1, \mu_2, \ldots ,\mu_r)$ for some $r\geq 1$. Hermite used these polynomials for simultaneous rational approximation of $r$ functions $f_1, f_2, \ldots, f_r$, the so-called Hermite-Pad\'{e} approximation, and used them to prove that $e$ is transcendental. There are various families of multiple orthogonal polynomials, some of which are generalizations of the classical orthogonal polynomials named after Jacobi, Laguerre and Hermite. In this paper we discuss an extension of the classical Laguerre polynomials on $[0,\infty)$. An introduction on multiple orthogonal polynomials can be found in, for example, the work of Aptekarev \cite{Aptekarev}, Ismail \cite[Chapter 23]{Ismail}, and Nikishin and Sorokin \cite[Chapter 4]{Nikishin1991}. 

There are two types of multiple orthogonal polynomials, depending on how their orthogonality is defined. Let $\vec{n}=(n_1, n_2, \ldots,n_r)$ be a multi-index of size $|\vec{n}|=n_1+ n_2 + \cdots +  n_r$ and let $\mu_1,\ldots,\mu_r$ be positive measures for which all the moments exist. The type I multiple orthogonal polynomials are given by a vector $(A_{\vec{n},1},\ldots,A_{\vec{n},r})$ of $r$ polynomials, with $\deg A_{\vec{n},j}=n_j-1$ for all $1 \leq j \leq r$, that satisfy the orthogonality conditions
\[ 		\sum_{j=1}^r \int x^k A_{\vec{n},j}(x)\, d\mu_j(x) = 0, \qquad 0 \leq k \leq |\vec{n}|-2,		\]
and normalization
\[		\sum_{j=1}^r \int x^{|\vec{n}|-1} A_{\vec{n},j}(x)\, d\mu_j(x) = 1.		\]
The type II multiple orthogonal polynomial is the monic polynomial $P_{\vec{n}}$ of degree $|\vec{n}|$ for which the following orthogonality conditions hold
\[  	\int x^k P_{\vec{n}}(x)\, d\mu_j(x) = 0, \qquad 0 \leq k \leq n_j-1,		\]
for all $1\leq j\leq r$. The orthogonality conditions and normalization for both the type I and type II multiple orthogonal polynomials give a linear system of $|\vec{n}|$ equations for the $|\vec{n}|$ unknown coefficients of the polynomials. Moreover, the corresponding matrices for the type I and type II polynomials are each other's transpose. So the existence and uniqueness of the two types are equivalent. If there exists a solution of the linear system and it is unique, then the multi-index $\vec{n}$ is called a normal multi-index. Furthermore, if all the multi-indices are normal then the system of measures $(\mu_1,\ldots,\mu_r)$ is a perfect system.  
One of the well-known perfect systems of measures is an Angelesco system. This system was first introduced by Angelesco \cite{Angelesco1919} and later independently by Nikishin \cite{Nikishin1979}. An Angelesco system is a system of $r$ measures $(\mu_1,\ldots,\mu_r)$ where $\mu_j$ has its support in an interval $\Delta_j$ and the intervals $\Delta_1,\ldots,\Delta_r^*$ are pairwise disjoint or touching.  

For $r=1$, we have the usual orthogonal polynomials. The most well-known orthogonal polynomials are the classical orthogonal polynomials named after Jacobi, Laguerre and Hermite. A lot of information on the classical orthogonal polynomials of Jacobi, Laguerre and Hermite can be found in Szeg\H{o}'s book \cite{Szego}.
Consider the monic Jacobi polynomial of degree $n$ on $[0,1]$ which is orthogonal with weight function $w(x)=x^{\beta}(1-x)^{\alpha}$, and denote it by $P_n^{(\alpha,\beta)}$. Note that one usually considers the Jacobi polynomials for the weight $(1-x)^{\alpha}(1+x)^{\beta}$ on $[-1,1]$, but with a change of variables $x \mapsto 2x-1$ one obtains the polynomials $P_n^{(\alpha,\beta)}$ on $[0,1]$. The monic Laguerre polynomials $L_n^{(\alpha)}$, with $\alpha>-1$, are orthogonal with respect to the weight $w(x)=x^{\alpha}e^{-x}$ on $[0,\infty)$. By direct computation, one can show that the Laguerre polynomials are limiting cases of the Jacobi polynomials
\begin{equation}
\label{JA-LA1}
L_n^{(\beta)}(x) = \lim_{\alpha\rightarrow \infty} \alpha^n P_{n}^{(\alpha,\beta)}(x/\alpha).
\end{equation}
A similar link was obtained by Coussement and Van Assche \cite{VACouss} in case $r=2$ for the type II Jacobi-Angelesco polynomials and the type II Laguerre-Hermite polynomials. The type II Jacobi-Angelesco polynomials on $[-1,0]\cup [0,1]$ are the monic polynomials $P_{n,m}^{(\alpha,\beta)}$ that satisfy the orthogonality conditions
\begin{align*}
\int_{-1}^0 x^k P_{n,m}^{(\alpha,\beta)}(x)|x|^{\beta}(1-x^2)^{\alpha}\,dx =0, \qquad 0 \leq k \leq n-1, \\
\int_0^{1} x^k P_{n,m}^{(\alpha,\beta)}(x)x^{\beta}(1-x^2)^{\alpha}\, dx=0, \qquad 0 \leq k \leq m-1,
\end{align*}
where $\alpha,\beta>-1$. These polynomials were studied in detail by Kalyagin \cite{Kalyagin1} and Kaliaguine and Ronveaux \cite{Kalyagin2}. The type II Laguerre-Hermite polynomials on $(-\infty,0]\cup [0,\infty)$ are the monic polynomials $L_{n,m}^{(\beta)}(x)$ that satisfy
\begin{align*}
\int_{-\infty}^0 x^k L_{n,m}^{(\beta)}(x) |x|^{\beta}e^{-x^2}\, dx =0, \qquad 0 \leq k \leq n-1, \\
\int_{0}^{\infty} x^k L_{n,m}^{(\beta)}(x) x^{\beta}e^{-x^2}\, dx =0, \qquad 0 \leq k \leq m-1.
\end{align*}
These polynomials are a limiting case of the type II Jacobi-Angelesco polynomials as follows
\begin{equation}
\label{eq:JA-LA2}
L_{n,m}^{(\beta)}(x) = \lim_{\alpha\rightarrow\infty} (\sqrt{\alpha})^{n+m} P_{n,m}^{(\alpha,\beta)}\left(\frac{x}{\sqrt{\alpha}}\right).
\end{equation}

In this paper we investigate both the type I and type II Laguerre-Angelesco polynomials that are orthogonal on $r$ semi-infinite intervals in the complex plane that form an $r$-star. Take the positive real line and copy it $r-1$ times by rotating it over angle $2\pi/r$. This gives the $r$-star
$\bigcup_{j=1}^r \Delta_j$, where
\[ \Delta_j = \{ t \omega^{j-1}  : 0 < t < \infty \}, \qquad 1\leq j \leq r, \]
with $\omega = e^{2\pi i /r}$, see Fig. \ref{fig:r-starLaguerre}.

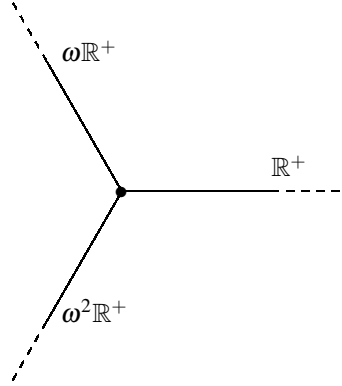
\begin{figure}
\centering
\unitlength=1cm
\begin{picture}(6,6)(-3,-3)
\put(0,0){\line(1,0){2}}
\multiput(2,0)(0.2,0){5}{\line(1,0){0.1}}
\put(0,0){\rotatebox{120}{\put(0,0){\line(1,0){2}}
\multiput(2,0)(0.2,0){5}{\line(1,0){0.1}}}}
\put(0,0){\rotatebox{240}{\put(0,0){\line(1,0){2}}
\multiput(2,0)(0.2,0){5}{\line(1,0){0.1}}}}
\put(0,0){\circle*{0.15}}
\put(2,0.2){$\mathbb{R}^+$}
\put(-0.8,1.732){$\omega\mathbb{R}^+$}
\put(-0.8,-1.732){$\omega^2\mathbb{R}^+$}
\end{picture}
\caption{$r$-star for Laguerre-Angelesco polynomials with $r=3$, $\omega= e^{2\pi i /3}$}
\label{fig:r-starLaguerre}
\end{figure}

%
%
%

For symmetry we take as weight function $w(x) = |x|^{\beta}e^{-x^r}$, $\beta>-1$, and the measure $\mu_j$ is then supported on the ray $\Delta_j$ of the $r$-star with this weight function. Let $\vec{n}=(n_1,n_2,\ldots,n_r)$ be a multi-index of size $|\vec{n}|=n_1+n_2+\cdots+n_r$, then the type I Laguerre-Angelesco polynomials $(A_{\vec{n},1},\ldots,A_{\vec{n},r})$  are defined by the orthogonality conditions
\[  \sum_{j=1}^r \int_{\Delta_j} x^k A_{\vec{n},j}(x;\beta) |x|^{\beta}e^{-x^r}\, dx = 0, \qquad 0 \leq k \leq |\vec{n}|-2,	\]
and normalization 
\[ 	\sum_{j=1}^r \int_{\Delta_j} x^{|\vec{n}|-1} A_{\vec{n},j}(x;\beta) |x|^{\beta}e^{-x^r}\, dx = 1.	\]
The type II Laguerre-Angelesco polynomial $L_{\vec{n}}$ is the monic polynomial of degree $|\vec{n}|$ defined by the orthogonality conditions 
\[	\int_{\Delta_j} x^k L_{\vec{n}}(x;\beta) |x|^{\beta}e^{-x^r}\, dx = 0, \qquad 0 \leq k \leq n_j-1,	\]
for all $1\leq j \leq r$. These polynomials were already studied by Sorokin, see \cite{Sorokin4} and \cite{Sorokin}. He studied the asymptotic behavior of these polynomials and their application in simultaneous Pad\'{e} approximations of functions of Stieltjes type. 

The type I Laguerre-Angelesco polynomials are studied in Section \ref{secIIsubI}, where we give explicit formulas for the polynomials on and near the diagonal, prove their multiple orthogonality, give the recurrence coefficients of the nearest neighbor recurrence relations \cite{VA} and obtain a differential equation of order $r+1$. This differential equation will be used to get the asymptotic distribution of the zeros. The type II Laguerre-Angelesco polynomials are investigated in Section \ref{secIIsubII}. The diagonal type II polynomials were already studied briefly by Sorokin \cite{Sorokin4,Sorokin} for simultaneous Pad\'e approximations. We give explicit formulas for these polynomials, give the recurrence coefficients of the nearest neighbor recurrence relation, obtain a differential equation of order $r+1$ and use this differential equation to get the asymptotic zero distribution. 

Note that there are other possible generalizations of the classical Laguerre polynomials, see \cite{Sorokin3,Sorokin2}. In \cite{Sorokin2}, Sorokin used an AT system instead of an Angelesco system of measures. This is also a perfect system of measures given by $(\mu_1,\ldots,\mu_r)$ where $\mu_j$ has weight function $w_j(x) = x^{\beta_j}e^{-x}$ supported on the positive real line. For a perfect system, the parameters must satisfy the condition: $\beta_i - \beta_j \notin \mathbb{Z}$, $1 \leq i < j \leq r$. These polynomials were also given in \cite{VACouss} where they are given as
multiple Laguerre polynomials of the first kind. For the multiple Laguerre polynomials of the second kind one uses
the weight functions $w_j(x) = x^\beta e^{-c_jx}$, where $c_j >0$ and $c_i \neq c_j$ whenever $i\neq j$.
The asymptotic distribution of the zeros of the multiple Laguerre polynomials of the first kind was given in \cite{NeuschelVA} and appeared already
in \cite{CoussCoussVA} for $r=2$.
The asymptotics of the multiple Laguerre polynomials of the second kind is given in \cite{Lysov,LysovWiel} for $r=2$.  
Another possible extension (also investigated by Sorokin) is to consider $r$ measures supported on the $r$-star, but with weight function $w(x) = x^{\beta} e^{-x}$. Both the extensions were used to investigate Hermite-Pad\'{e} approximation of Stieltjes type functions
\[		f_j = \int \frac{d\mu_j(x)}{x-z}. 		\] 

Similar as in cases $r=1,2$, our Laguerre-Angelesco polynomials on an $r$-star are limiting cases of the Jacobi-Angelesco polynomials on an $r$-star. So we first review some known results of these polynomials from \cite{LeursVA}.

\section{Jacobi-Angelesco polynomials on an $r$-star}  \label{secI}

For the Jacobi-Angelesco polynomials we consider the same configuration of $r$ intervals as for the Laguerre-Angelesco polynomials, but instead of half-lines we take intervals of length $1$. So we have $r$ intervals given by $[0,\omega^{j}]$ with $\omega=e^{2 \pi i /r}$, see Fig. \ref{fig:r-starJacobi}.
%
\begin{figure}
\centering
\unitlength=1cm
\begin{picture}(6,6)(-3,-3)
\put(0,0){\line(1,0){2.5}}
\put(2.5,0){\circle*{0.15}}
\put(0,0){\rotatebox{72}{\put(0,0){\line(1,0){2.5}}
\put(2.5,0){\circle*{0.15}}}}
\put(0,0){\rotatebox{144}{\put(0,0){\line(1,0){2.5}}
\put(2.5,0){\circle*{0.15}}}}
\put(0,0){\rotatebox{216}{\put(0,0){\line(1,0){2.5}}
\put(2.5,0){\circle*{0.15}}}}
\put(0,0){\rotatebox{288}{\put(0,0){\line(1,0){2.5}}
\put(2.5,0){\circle*{0.15}}}}
\put(0,0){\circle*{0.15}}
\put(2.7,0){$1$}
\put(0.95,2.25){$\omega$}
\put(-1.8,1.55){$\omega^2$}
\put(-1.8,-1.60){$\omega^3$}
\put(0.95,-2.35){$\omega^4$}
\end{picture}
\caption{$r$-star for Jacobi-Angelesco polynomials with $r=5$, $\omega= e^{2\pi i /5}$.}
\label{fig:r-starJacobi}
\end{figure}
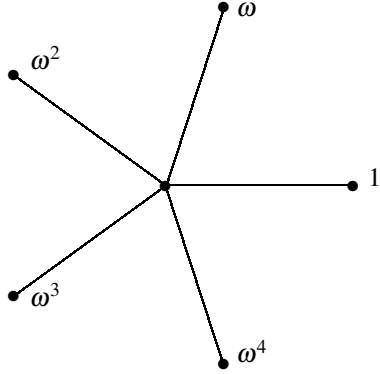
To preserve the symmetry, we take as weight function $w(x) = |x|^{\beta}(1-x^r)^{\alpha}$ and the measure $\mu_j$ supported on the interval $[0,\omega^{j-1}]$, $j=1,\ldots,r$ with this weight function. We only state the definition and explicit expressions of the type I and type II Jacobi-Angelesco polynomials. For details and proofs we refer to \cite{LeursVA}. Note that for $r=1$, we obtain the classical Jacobi polynomials on $[0,1]$.

\subsection{Type I Jacobi-Angelesco polynomials}	\label{secIsubI}
The type I Jacobi-Angelesco polynomials for the multi-index $(n_1,\ldots,n_r)$ on the $r$-star and parameters $\alpha,\beta >-1$ are given by the vector of polynomials $(B_{\vec{n},1},\ldots,B_{\vec{n},r})$, with $\deg B_{\vec{n},j} = n_j-1$ for all $1 \leq j\leq r$ that satisfies 
\[ 	\sum_{j=1}^r \int_0^{\omega^{j-1}} x^k B_{\vec{n},j}(x;\alpha,\beta)|x|^{\beta}(1-x^r)^{\alpha}\, dx = 0, \qquad 0 \leq k \leq |\vec{n}|-2,	\]
and
\[ 	\sum_{j=1}^r \int_0^{\omega^{j-1}} x^{|\vec{n}|-1} B_{\vec{n},j}(x;\alpha,\beta)|x|^{\beta}(1-x^r)^{\alpha}\, dx = 1.	\]
The type I Jacobi-Angelesco polynomials on and near the diagonal on the $r$-star are given explicitly in terms of the polynomials
\begin{equation}    \label{pnJA}
	p_n(x;\alpha,\beta) = 
  \sum_{k=0}^n \binom{n}{k} \frac{\Gamma(n+\alpha+\frac{\beta+k}{r}+1)}{\Gamma(n+\alpha+1) \Gamma(\frac{\beta+k}{r}+1)} (-1)^{n-k}x^k.
\end{equation}
One can show that these polynomials satisfy $n$ orthogonality conditions for the weight $x^{\beta}(1-x^r)^{\alpha}$ on $[0,1]$
\[	\int_0^1 x^{rj-1} p_{n}(x;\alpha,\beta) x^{\beta}(1-x^r)^{\alpha}\, dx = 0, \qquad 1\leq j \leq n,\]
and normalization
\[\int_0^1 x^{rn+r-1} p_{n}(x;\alpha,\beta) x^{\beta}(1-x^r)^{\alpha}\, dx= \frac{n!}{(rn+r\alpha+\beta+r)_{n+1}}.\]
With these conditions for the polynomials $p_n(x;\alpha,\beta)$, one can verify that the following polynomials satisfy the multiple orthogonality conditions and normalization of the type I Jacobi-Angelesco polynomials. First take a multi-index $\vec{n}=(n,\ldots,n)$ and let $\vec{e}_k$ be the $k^{\mbox{\footnotesize th}}$ unit vector of $\mathbb{N}^r$, then the type I Jacobi-Angelesco polynomials on the diagonal are given by
\[ 	B_{\vec{n},j}(x;\alpha,\beta) = \frac{(rn+r\alpha+\beta)_{n}}{r (n-1)!} p_{n-1}(\omega^{-j+1}x;\alpha,\beta), \qquad 1 \leq j\leq r.	\]
Above the diagonal, the type I Jacobi-Angelesco polynomials $B_{\vec{n}+\vec{e}_k,j}$ are given by
\[ 	\tau_{n,r}^{(\alpha,\beta)} B_{\vec{n}+\vec{e}_k,j}(x;\alpha,\beta) = \sum_{\ell=0}^{r-1} \frac{\omega^{\ell(j-k)}}{\nu_n^{\alpha,\beta-\ell}}p_n(x;\alpha,\beta-\ell), \qquad 1 \leq j,k \leq r,	\]
with $\tau_{n,r}$ a normalizing constant and $\nu_n^{\alpha,\beta}$ the leading coefficient of $p_n(x;\alpha,\beta)$.
For every $r>1$ and $n>0$, the type I Jacobi-Angelesco polynomials below the diagonal $B_{\vec{n}-\vec{e}_k,j}$ are given by
\begin{multline*}
\gamma_{n,r}^{(\alpha,\beta)} B_{\vec{n}-\vec{e}_k,j}(x;\alpha,\beta) =  \omega^{j-1} \nu_{n-1}^{(\alpha,\beta)} p_{n-1}(\omega^{-j+1}x;\alpha,\beta-1) \\
- \omega^{k-1} \nu_{n-1}^{(\alpha,\beta-1)} p_{n-1}(\omega^{-j+1}x;\alpha,\beta),
\end{multline*}
\noindent for $1\leq j,k \leq r$ and $\gamma_{n,r}^{(\alpha,\beta)}$ a normalizing constant.

In \cite{LeursVA} one can find the recurrence relations for these polynomials, a differential equation of order $r+1$ and the asymptotic zero behavior for the diagonal polynomials. 

\subsection{Type II Jacobi-Angelesco polynomials} 	\label{secIsubII}
The type II Jacobi-Angelesco polynomial $P_{\vec{n}}^{(\alpha,\beta)}$ for the multi-index $\vec{n}=(n_1,\ldots,n_r)$ on the $r$-star and parameters $\alpha,\beta>-1$ is a monic polynomial of degree $|\vec{n}|$ defined by the orthogonality conditions
\[	\int_0^{\omega^{j-1}}x^k P_{\vec{n}}^{(\alpha,\beta)}(x) |x|^{\beta}(1-x^r)^{\alpha}\, dx = 0, \qquad 0 \leq k \leq n_j-1,	\]
for all $1 \leq j \leq r$. For the multi-index $\vec{n}=(n,n,\ldots,n)$. These polynomials are given by the Rodrigues type formula
\[x^{\beta}(1-x^r)^{\alpha}P_{\vec{n}}^{(\alpha,\beta)}(x) = \frac{(-1)^n}{(rn+r\alpha+\beta+1)_n}\frac{d^n}{dx^n} [x^{\beta+n}(1-x^r)^{\alpha+n}].\]
These polynomials were studied by Kalyagin \cite{Kalyagin1}, who considered the case $r=2$ and investigated the asymptotic behavior of $P_{n,m}$. Later, Kaliaguine and Ronveaux \cite{Kalyagin2} found a third order differential equation, a four term recurrence relation and the asymptotic behavior of the ratio of two neighboring polynomials.

From the Rodrigues formula of $P_{\vec{n}}^{(\alpha,\beta)}$, one can compute the explicit formula
\begin{multline*}
\binom{rn+r\alpha+\beta+n}{n} P_{\vec{n}}^{(\alpha,\beta)}(x) \\
= \sum_{|\vec{k}|=n} \binom{n+\beta}{k_0} \binom{n+\alpha}{k_1} \cdots \binom{n+\alpha}{k_r} x^{n-k_0} (x-1)^{n-k_1} \ldots (x-\omega^{r-1})^{n-k_r}.
\end{multline*}

\section{Laguerre-Angelesco polynomials on the $r$-star}	\label{secII}
Now we can state some new results for the Laguerre-Angelesco polynomials on an $r$-star. For the type I polynomials, we give explicit expressions, the recurrence coefficients for the nearest neighbor recurrence relation and a differential equation for the polynomials on the diagonal. With this differential equation, we can investigate the asymptotic behavior of the zeros of the polynomials on the diagonal. The type II Laguerre-Angelesco polynomials were already studied (briefly) by Sorokin \cite{Sorokin}, but only the Rodrigues formula was stated for the polynomials on the diagonal. From this Rodrigues formula we compute the explicit expression and derive a differential equation of order $r+1$. This differential equation will be used to obtain the asymptotic behavior of the zeros. We also state the Rodrigues formula and an explicit expression for the polynomials above the diagonal. With these expressions, we can compute the recurrence coefficients in the nearest neighbor recurrence relation. Also, in both sections we state the link between the Laguerre-Angelesco polynomials and the Jacobi-Angelesco polynomials on an $r$-star from the previous section.

\subsection{Type I Laguerre-Angelesco polynomials}	\label{secIIsubI}
The type I Laguerre-Angelesco polynomials for the multi-index $(n_1,\ldots,n_r)$ on an $r$-star and parameter $\beta>-1$ are given by the vector of polynomials $(A_{\vec{n},1},\ldots,A_{\vec{n},r})$, which is uniquely defined by
\begin{enumerate}
 \item degree conditions: the degree of $A_{\vec{n},j}$ is $n_j-1$, 
 \item orthogonality condition 
\[		\sum_{j=1}^r \int_0^{\omega^{j-1} \infty} x^k A_{\vec{n},j}(x;\beta) |x|^{\beta}e^{-x^r}\, dx = 0, \qquad 0 \leq k \leq |\vec{n}|-2,	\]
 \item normalization condition
\[ 	\sum_{j=1}^r \int_0^{\omega^{j-1} \infty} x^{|\vec{n}|-1} A_{\vec{n},j}(x;\beta) |x|^{\beta}e^{-x^r}\,dx = 1.	\]
\end{enumerate}

\subsubsection{Explicit expressions}
The polynomials $A_{\vec{n},j}$ on and near the diagonal can be expressed in terms of the polynomials
\begin{equation}
\label{eq:pn}
p_n(x;\beta) = \sum_{k=0}^n \binom{n}{k} \frac{1}{\Gamma(\frac{\beta+k}{r}+1)}(-1)^{n-k}x^k.
\end{equation}
First we prove that these polynomials satisfy $n$ orthogonality conditions and a normalization for the weight $w(x) = x^{\beta}e^{-x^r}$ on $[0,\infty)$. 
\begin{proposition}		\label{proposition1}
The polynomials $p_n(x;\beta)$ given by (\ref{eq:pn}) satisfy 
\begin{equation}	\label{eq:orthopn}
	\int_0^{\infty} x^{rj-1} p_n(x;\beta) x^{\beta}e^{-x^r}\,dx=0, \qquad 1 \leq j \leq n,
\end{equation}
and 
\begin{equation}	\label{eq:normpn}
	\int_0^{\infty} x^{rn+r-1} p_n(x;\beta) x^{\beta}e^{-x^r}\,dx=\frac{n!}{r^{n+1}}.
\end{equation}
\end{proposition}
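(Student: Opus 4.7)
The plan is to expand the integral in terms of the coefficients of $p_n$, reduce each moment integral to a Gamma function via substitution, and then exploit the classical finite-difference annihilation identity $\sum_{k=0}^{n}(-1)^{n-k}\binom{n}{k}Q(k)=0$ whenever $\deg Q<n$.

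First, the substitution $u=x^r$ gives the moment formula
\[
\int_0^\infty x^{a-1}e^{-x^r}\,dx=\tfrac{1}{r}\Gamma\!\left(\tfrac{a}{r}\right),
\]
so that for $1\le j\le n+1$,
\[
\int_0^\infty x^{rj-1}p_n(x;\beta)\,x^\beta e^{-x^r}\,dx
=\frac{1}{r}\sum_{k=0}^n\binom{n}{k}(-1)^{n-k}\,
\frac{\Gamma\!\left(j+\tfrac{k+\beta}{r}\right)}{\Gamma\!\left(1+\tfrac{k+\beta}{r}\right)}.
\]
Using $\Gamma(z+j)=(z)_{j-1}\cdot\Gamma(z+1)\cdot\tfrac{1}{z}\cdot z = \Gamma(z+1)\cdot(z+1)_{j-1}$ applied with $z=(k+\beta)/r$, the ratio of Gamma functions collapses to the Pochhammer symbol
\[
\frac{\Gamma\!\left(j+\tfrac{k+\beta}{r}\right)}{\Gamma\!\left(1+\tfrac{k+\beta}{r}\right)}
=\left(1+\tfrac{k+\beta}{r}\right)_{j-1},
\]
which is a polynomial in $k$ of degree exactly $j-1$.

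Now I invoke the standard identity: for any polynomial $Q$ of degree $d$,
\[
\sum_{k=0}^n\binom{n}{k}(-1)^{n-k}Q(k)=\begin{cases}0, & d<n,\\ n!\,c_n, & d=n,\end{cases}
\]
where $c_n$ is the leading coefficient of $Q$. Since our polynomial $Q_j(k)=(1+(k+\beta)/r)_{j-1}$ has degree $j-1$, the sum vanishes whenever $j-1<n$, i.e.\ for $1\le j\le n$. This establishes (\ref{eq:orthopn}).

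For the normalization (\ref{eq:normpn}) take $j=n+1$, so that $Q_{n+1}(k)$ has degree $n$ with leading coefficient $r^{-n}$ (each of the $n$ linear factors in $k$ contributes a factor $1/r$). The finite-difference identity then yields $\sum_k \binom{n}{k}(-1)^{n-k}Q_{n+1}(k)=n!/r^n$, and dividing by $r$ gives $n!/r^{n+1}$, as claimed.

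The only mild obstacle is the bookkeeping of Gamma-function ratios and confirming that no cancellations or indexing errors spoil the degree count. Everything else is routine: linear combinations of moments, Pochhammer identification, and the polynomial annihilation by $n$-fold differences.
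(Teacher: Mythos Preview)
Your proof is correct and follows essentially the same route as the paper: expand the integral via the explicit formula for $p_n$, reduce each moment to a Gamma function by the substitution $u=x^r$, rewrite the resulting Gamma ratio as the Pochhammer symbol $\bigl(1+\tfrac{k+\beta}{r}\bigr)_{j-1}$, and then apply the finite-difference annihilation identity. The only difference is that the paper states and proves that identity as a separate lemma, whereas you cite it as standard.
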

\begin{proof}
For the orthogonality, we use the explicit expression of $p_n(x;\beta)$ which gives
\[	\int_0^{\infty} x^{rj-1} p_n(x;\beta) x^{\beta}e^{-x^r}dx= \sum_{k=0}^n \binom{n}{k}  \frac{1}{\Gamma(\frac{\beta+k}{r}+1)}(-1)^{n-k}	
   \int_0^{\infty} x^{rj+k+\beta-1} e^{-x^r}\, dx.\]
With the change of variables $x\mapsto x^r$ and the definition of the Gamma function, this equals
\begin{equation}	\label{eq:intpn}
	\frac1r \sum_{k=0}^n \binom{n}{k}(-1)^{n-k}  \frac{\Gamma(\frac{\beta+k}{r}+j)}{ \Gamma(\frac{\beta+k}{r}+1)} 
      =	\frac{1}{r}\sum_{k=0}^n \binom{n}{k}(-1)^{n-k} \left(\frac{\beta+k}{r}+1 \right)_{j-1},
\end{equation}
which is equal to zero by (\ref{eq:lemma1}) of the lemma below. For the normalization, we can use the same expression for the integral as 
(\ref{eq:intpn}), but with $j=n+1$
\[	\int_0^{\infty} x^{rn+r-1} p_n(x;\beta) x^{\beta}e^{-x^r}\,dx = \frac{1}{r}\sum_{k=0}^n \binom{n}{k}(-1)^{n-k} 
  \left(\frac{\beta+k}{r}+1 \right)_{n}.\]
Since $\left(\frac{\beta+k}{r}+1 \right)_{n}$ is a  polynomial in $k$ of degree $n$ with leading term $1/r^n$ we have, due to (\ref{eq:lemma1}) 
and (\ref{eq:lemma2}) from the lemma below, that this expression is equal to 
\[	\frac{1}{r^{n+1}}\sum_{k=0}^n \binom{n}{k}(-1)^{n-k} k^n = \frac{n!}{r^{n+1}}.\]
\end{proof}
In the proof of the previous proposition, we used the following result.

\begin{lemma}	\label{lemma}
For all $n \in \mathbb{N}$ one has
\begin{equation}	\label{eq:lemma1}
	\sum_{k=0}^n \binom{n}{k} (-1)^{n-k} k^m = 0, \qquad 0 \leq m \leq n-1,
\end{equation}
and 
\begin{equation}	\label{eq:lemma2}
	\sum_{k=0}^n \binom{n}{k} (-1)^{n-k} k^n = n!.
\end{equation}
\end{lemma}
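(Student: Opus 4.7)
The plan is to prove both identities simultaneously by recognizing the sum $S_m := \sum_{k=0}^n \binom{n}{k}(-1)^{n-k} k^m$ as the coefficient of $t^m/m!$ in a single generating function, namely $(e^t - 1)^n$.

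First I would use the binomial theorem to write
\[
(e^t - 1)^n = \sum_{k=0}^n \binom{n}{k} (-1)^{n-k} e^{kt},
\]
and then expand each exponential on the right as a power series in $t$, interchanging the finite sum over $k$ and the sum over $m$. This gives the expansion
\[
(e^t - 1)^n = \sum_{m=0}^\infty \frac{t^m}{m!} \sum_{k=0}^n \binom{n}{k}(-1)^{n-k} k^m = \sum_{m=0}^\infty \frac{S_m}{m!}\, t^m.
\]

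Next I would compare with the left-hand side expanded directly. Since $e^t - 1 = t + t^2/2! + t^3/3! + \cdots$ starts at order $t$, the product $(e^t-1)^n$ starts at order $t^n$ with leading coefficient $1$. Hence the coefficient of $t^m$ vanishes for $0 \leq m \leq n-1$, which yields (\ref{eq:lemma1}), while the coefficient of $t^n$ equals $1$, which gives $S_n/n! = 1$ and hence (\ref{eq:lemma2}).

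There is no real obstacle here; the only step requiring minor care is justifying the rearrangement of the double series, which is immediate because the outer sum over $k$ is finite. An alternative route, for readers who prefer a finite-difference viewpoint, would be to note that $S_m = (\Delta^n f)(0)$ with $f(k) = k^m$, where $\Delta$ is the forward difference operator, and to invoke the standard fact that $\Delta^n$ annihilates polynomials of degree $<n$ and sends $k^n$ to $n!$; but the generating function proof above is self-contained and shorter.
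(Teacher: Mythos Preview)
Your proof is correct, but it takes a genuinely different route from the paper. The paper proves \eqref{eq:lemma1} by differentiating Newton's binomial formula $(x+y)^n=\sum_k\binom{n}{k}x^k y^{n-k}$ a total of $m$ times in $x$ and then evaluating at $x=1$, $y=-1$; this yields $\sum_k\binom{n}{k}(-1)^{n-k}(k-m+1)_m=0$, and since $(k-m+1)_m$ is a monic polynomial of degree $m$ in $k$, the claim follows by triangularity. The paper then proves \eqref{eq:lemma2} separately by induction on $n$, using \eqref{eq:lemma1} to discard lower-order terms in the expansion of $(k+1)^n$. Your generating-function argument handles both identities in a single stroke by reading off the Taylor coefficients of $(e^t-1)^n$; it is shorter, more conceptual (the numbers $S_m/m!$ are, up to normalization, Stirling numbers of the second kind), and avoids a two-step structure. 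The paper's approach, by contrast, is purely algebraic and avoids any appeal to power series, which some readers may prefer; its cost is the need for a separate induction for the second identity. Either proof is perfectly adequate for the lemma's role in the paper.
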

\begin{proof}
From Newton's binomial formula
\[	(x+y)^n = \sum_{k=0}^n \binom{n}{k} x^k y^{n-k},	\]
we find after differentiating $m$ times with respect to $x$
\[	(n-m+1)_m (x+y)^{n-m} = \sum_{k=m}^n \binom{n}{k} (k-m+1)_m x^{k-m} y^{n-k}.	\]
If we take $x=1$ and $y=-1$, then for $0 \leq m \leq n-1$
\[	\sum_{k=m}^n \binom{n}{k} (-1)^{n-k} (k-m+1)_m = 0.	\]
Since $(k-m+1)_m$ is a monic polynomial of degree $m$ in $k$, this is equivalent with (\ref{eq:lemma1}). Relation (\ref{eq:lemma2}) can be proven by induction on $n$. The case $n=0$ is trivial. If it holds for $n\geq0$, then the left hand sides of (\ref{eq:lemma2}) for $n+1\geq1$ equals
\begin{eqnarray*}
\sum_{k=0}^{n+1} \binom{n+1}{k} (-1)^{n+1-k} k^{n+1} &=& \sum_{k=1}^{n+1} \binom{n+1}{k} (-1)^{n+1-k} k^{n+1}\\ 
	&=& (n+1)\sum_{k=0}^{n} \binom{n}{k} (-1)^{n-k} (k+1)^{n}.
\end{eqnarray*}
Then write $(k+1)^n = k^n + nk^{n-1}+\cdots$  and by (\ref{eq:lemma1}) we have
\[\sum_{k=0}^{n+1} \binom{n+1}{k} (-1)^{n+1-k} k^{n+1} = (n+1)\sum_{k=0}^{n} \binom{n}{k} (-1)^{n-k} k^n, \]
where the sum is equal to $n!$ by the induction hypothesis. 
\end{proof}

These polynomials $p_n(x;\beta)$ are generalized hypergeometric functions ${}_r F_r$. One can show (by combining the same powers of $x \bmod r$) that
\[ p_n(x;\beta)=\sum_{k=0}^{r-1}\binom{n}{k}\frac{1}{\Gamma(\frac{\beta+k}{r}+1)}(-1)^{n-k}x^k {}_{r} F_r\left(\begin{matrix}
\left(-\frac{n-k-j}{r} \right)_{j=0}^{r-1} &  \\
\left(\frac{k+1+j}{r} \right)_{\substack{j=0 \\ k+j\neq r-1}}^{r-1}, & \frac{\beta+k}{r}+1 
\end{matrix};x^r\right).\]

Due to the orthogonality and normalization of the polynomials $p_n(x;\beta)$, the type I Laguerre-Angelesco polynomials $A_{\vec{n},j}^{(\beta)}$ on and near the diagonal can be expressed in terms of these polynomials.
\begin{theorem}	\label{theorem:diagonal}
The type I Laguerre-Angelesco polynomials on the diagonal $\vec{n}=(n+1,\ldots,n+1)$ are given by
\begin{equation}
	\label{eq:diagonalLA}
	A_{\vec{n},j}(x;\beta) = \frac{r^n}{n!} p_n(\omega^{-j+1}x;\beta), \qquad 1 \leq j\leq r.
\end{equation}
\end{theorem}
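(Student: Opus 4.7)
The plan is to verify directly that the candidate vector
\[ A_{\vec{n},j}(x;\beta) = \frac{r^n}{n!}\, p_n(\omega^{-j+1}x;\beta), \qquad 1\le j\le r, \]
satisfies all three defining conditions of the type I problem, and then appeal to uniqueness (which holds because Angelesco systems are perfect, as recalled in the introduction). The degree condition is immediate: $p_n$ has degree $n$, so $\deg A_{\vec{n},j}=n=n_j-1$.

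The heart of the argument is a single change of variables on each ray. On $\Delta_j$, set $y=\omega^{-j+1}x$, so $dx=\omega^{j-1}\,dy$, while $|x|^\beta=y^\beta$ and $x^r=y^r$ since $|\omega|=1$ and $\omega^r=1$. The ray $\Delta_j$ is mapped to $[0,\infty)$, giving
\[ \int_{\Delta_j} x^k\, p_n(\omega^{-j+1}x;\beta)\,|x|^\beta e^{-x^r}\,dx \;=\; \omega^{(j-1)(k+1)} \int_0^\infty y^k\, p_n(y;\beta)\, y^\beta e^{-y^r}\,dy. \]
Summing over $j=1,\dots,r$ factors out the geometric sum $\sum_{j=0}^{r-1}\omega^{j(k+1)}$, which equals $r$ when $r\mid (k+1)$ and vanishes otherwise.

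Hence only the indices $k=rm-1$ survive. For the orthogonality range $0\le k\le |\vec{n}|-2=r(n+1)-2$, this forces $1\le m\le n$, and Proposition~\ref{proposition1} (equation~(\ref{eq:orthopn})) gives
\[ \int_0^\infty y^{rm-1}\, p_n(y;\beta)\, y^\beta e^{-y^r}\,dy=0, \qquad 1\le m\le n, \]
so every orthogonality condition holds. For the normalization $k=|\vec{n}|-1=r(n+1)-1$, the index $m=n+1$ survives, the sum contributes a factor $r$, and (\ref{eq:normpn}) yields
\[ \frac{r^n}{n!}\cdot r\cdot \frac{n!}{r^{n+1}}=1, \]
as required.

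The only subtle point worth spelling out is the contour-to-real reduction in the change of variables, which works cleanly here because both the weight $|x|^\beta e^{-x^r}$ and the measure on $\Delta_j$ are intrinsically symmetric under multiplication by $\omega$. Apart from that, nothing is delicate: the proof reduces to a roots-of-unity filter combined with the scalar orthogonality already established in Proposition~\ref{proposition1}, and uniqueness of the type I vector (guaranteed by perfectness of the Angelesco system) closes the argument.
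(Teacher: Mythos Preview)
Your proof is correct and follows essentially the same approach as the paper: both reduce each ray integral to $[0,\infty)$ by the change of variables $y=\omega^{-j+1}x$, use the roots-of-unity filter to kill all $k$ with $k+1\not\equiv 0\pmod r$, and then invoke Proposition~\ref{proposition1} for the surviving moments. Your explicit mention of uniqueness via perfectness of the Angelesco system is a nice touch that the paper leaves implicit.
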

\begin{proof}
The degree conditions are clearly satisfied since $p_n$ is of degree $n$. For the orthogonality and normalization, we want the following integral to vanish for $0 \leq k \leq rn+r-2$ and to be equal to $1$ for $k=rn+r-1$
\[	\sum_{j=1}^r \int_0^{\omega^{j-1}\infty} x^k A_{\vec{n},j}(x;\beta) |x|^{\beta}e^{-x^r}\,dx.\]
By (\ref{eq:diagonalLA}) this integral becomes
\[	\frac{r^n}{n!} \sum_{j=1}^r (\omega^{k+1})^{j-1} \int_0^{\infty} x^k p_n(x;\beta) x^{\beta}e^{-x^r}\,dx.\]
Since $\omega$ is the $r^{\mbox{\footnotesize th}}$ root of unity, the sum in this expression is non-zero for only one value of $k$ modulo $r$
\[	\sum_{j=1}^r (\omega^{k+1})^{j-1} = \left\{ \begin{matrix}
	0, & k+1 \not\equiv 0 \mod r, \\
	r, & k+1 \equiv 0 \mod r.
	\end{matrix} \right.\]
Hence, for the orthogonality one only needs to check that
\[ \int_0^{\infty} x^{rj-1} p_n(x;\beta) x^{\beta}e^{-x^r}\,dx = 0,\]
holds for all $1\leq j \leq n$, which is true by (\ref{eq:orthopn}). For the normalization, we need to check the following
\[\frac{r^{n+1}}{n!}\int_0^{\infty} x^{rn+n-1} p_n(x;\beta) x^{\beta}e^{-x^r}\, dx = 1,\]
which is true by (\ref{eq:normpn}). 
\end{proof}

Next, we show that the type I Laguerre-Angelesco polynomials above the diagonal can be written as a linear combination of the polynomials $p_n(x;\beta-j)$ with $0\leq j \leq r-1$.
\begin{theorem}	\label{theorem:above}
Let $\vec{n}=(n,\ldots,n)$ and $\vec{e}_k$ be the $k^{\mbox{\footnotesize th}}$ unit vector in $\mathbb{N}^r$. The type I Laguerre-Angelesco polynomials 
$A_{\vec{n}+\vec{e}_k,j}$ are given by
\[	A_{\vec{n}+\vec{e}_k,j}(x;\beta) = \omega^{-k+1} A_{j-k \mod r} (\omega^{-j+1}x), \qquad 1 \leq j,k \leq r,\]
where the polynomials $A_{\ell}$, $0\leq \ell \leq r-1$ are given by
\begin{equation}	\label{eq:diagonal2}
	\tau_{n,r}(\beta)A_{\ell}(x) = \sum_{j=0}^{r-1}\frac{\omega^{\ell j}}{\nu_n^{(\beta-j)}}p_n(x;\beta-j),
\end{equation}
with normalizing constant
\begin{equation}	\label{eq:tau}
	\tau_{n,r}(\beta) = \frac{n!}{r^n }\Gamma\left(\frac{\beta+n+1}{r}\right),
\end{equation}
and $\nu_n^{(\beta)}$ the leading coefficient of $p_n(x;\beta)$
\begin{equation}	\label{eq:nu}
	\nu_n^{(\beta)} = \frac{1}{\Gamma(\frac{\beta+n}{r}+1)}.
\end{equation}
\end{theorem}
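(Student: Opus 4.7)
\emph{Plan.} The strategy is to verify the three defining conditions for type I Laguerre-Angelesco polynomials—correct degrees, orthogonality, and normalization—for the candidates produced by the formula, and then appeal to uniqueness. No new analytic input is needed beyond Proposition~\ref{proposition1} and orthogonality of characters on $\mathbb{Z}/r\mathbb{Z}$.

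First I would check the degrees. From (\ref{eq:diagonal2}), the $x^n$-coefficient of $A_\ell(x)$ equals $\frac{1}{\tau_{n,r}(\beta)}\sum_{s=0}^{r-1}\omega^{\ell s}$, which is $r/\tau_{n,r}(\beta)$ for $\ell=0$ and vanishes otherwise. Hence $A_0$ has degree exactly $n=n_k-1$, while $A_\ell$ with $\ell\neq 0$ has degree at most $n-1=n_j-1$ for $j\neq k$, matching the required degree conditions. To verify orthogonality and normalization simultaneously I would consider
\[ S_m := \sum_{j=1}^r \int_0^{\omega^{j-1}\infty} x^m A_{\vec{n}+\vec{e}_k,j}(x;\beta)\,|x|^\beta e^{-x^r}\, dx, \]
and reduce each integral to one on $[0,\infty)$ via the change of variable $y=\omega^{-j+1}x$, which uses $|y|=y$, $y^r = x^r$, and $x^m = \omega^{m(j-1)} y^m$. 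Inserting the expansion of $A_\ell$ and interchanging the sums, while writing $\ell\equiv j-k\pmod r$ so that $\omega^{(j-1)(m+1)} = \omega^{(k-1)(m+1)}\omega^{\ell(m+1)}$, the prefactor $\omega^{-k+1+(k-1)(m+1)}$ simplifies to $\omega^{(k-1)m}$, yielding
\[ S_m = \frac{\omega^{(k-1)m}}{\tau_{n,r}(\beta)}\sum_{s=0}^{r-1}\frac{I_s(m)}{\nu_n^{(\beta-s)}}\sum_{\ell=0}^{r-1}\omega^{\ell(m+1+s)}, \qquad I_s(m) := \int_0^\infty y^{m+\beta}\, p_n(y;\beta-s)\, e^{-y^r}\,dy. \]

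The inner root-of-unity sum vanishes unless $s\equiv -(m+1)\pmod r$; for $m=rq+r-1-s$ with $0\le s\le r-1$, the surviving integral can be rewritten as $\int_0^\infty y^{r(q+1)-1}\,p_n(y;\beta-s)\,y^{\beta-s}e^{-y^r}\,dy$, which by Proposition~\ref{proposition1} vanishes for $0\le q\le n-1$ and equals $n!/r^{n+1}$ for $q=n$. Since $m\le rn-1$ forces $q=\lfloor m/r\rfloor\le n-1$, this gives $S_m=0$ for all $m$ in the orthogonality range. For $m=rn$ we get $q=n$, $s=r-1$, $\omega^{(k-1)rn}=1$ and $1/\nu_n^{(\beta-r+1)}=\Gamma(\tfrac{\beta+n+1}{r})$, so that
\[ S_{rn} = \frac{r\,n!\,\Gamma(\tfrac{\beta+n+1}{r})}{r^{n+1}\,\tau_{n,r}(\beta)}, \]
which equals $1$ precisely when $\tau_{n,r}(\beta)=\frac{n!}{r^n}\Gamma(\tfrac{\beta+n+1}{r})$; this justifies the normalizing constant~(\ref{eq:tau}). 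Uniqueness of the type I polynomials (which follows once existence is thus established) concludes the argument.

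\emph{Main obstacle.} The delicate step is the phase bookkeeping: arranging that the three separate roots of unity—the overall prefactor $\omega^{-k+1}$, the contour-rotation phase $\omega^{(m+1)(j-1)}$, and the internal factor $\omega^{\ell s}$—combine into the single prefactor $\omega^{(k-1)m}$ that collapses to $1$ exactly at the normalization value $m=rn$, and simultaneously that the $\ell$-sum selects the unique shift $s=r-1-(m\bmod r)$ that aligns $y^{m+\beta}$ with $y^{r(q+1)-1}y^{\beta-s}$ so Proposition~\ref{proposition1} applies verbatim. Once this alignment is in place the remaining computation is essentially mechanical.
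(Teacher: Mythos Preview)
Your proof is correct and follows essentially the same route as the paper's own argument: check the degree conditions via the character sum $\sum_{s=0}^{r-1}\omega^{\ell s}$ on the leading coefficient, rotate each integral to $[0,\infty)$, use orthogonality of the $r$th roots of unity to isolate a single shift $s$ (the paper's $m$), and then invoke Proposition~\ref{proposition1} to obtain both the orthogonality relations and the normalization. Your bookkeeping of the phases into a single prefactor $\omega^{(k-1)m}$ is slightly more explicit than the paper's, but the two computations are otherwise line-for-line equivalent.
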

\begin{proof}
We will first determine the degree of the polynomials $A_{\ell}$. For $\ell=0$, we see that $\deg A_{\ell} = \deg p_n(x;\beta)=n$, which implies that $\deg A_{\vec{n}+\vec{e}_j,j}=n$ for all $1\leq j \leq r$. For $\ell=1,2,\ldots,r-1$ the coefficient of $x^n$ on the right hand side of 
(\ref{eq:diagonal2}) is given by
\[	\sum_{j=0}^{r-1} \omega^{\ell j} = \frac{1-\omega^{r\ell}}{1-\omega^{\ell}}=0.\]
Therefore, for all $k \neq j$ we have $\deg A_{\vec{n}+\vec{e}_k,j}<n$ and one can check that it is in fact $n-1$. For the orthogonality and normalization we need the following integral to vanish for all $0\leq \ell \leq rn-1$ and to be equal to $1$ for $\ell=rn$
\[	\sum_{j=1}^r \int_0^{\omega^{j-1}\infty} x^{\ell} A_{\vec{n}+\vec{e}_k,j}(x;\beta) |x|^{\beta}e^{-x^r}\,dx,\]
and this expression is equal to
\[\frac{\omega^{-k+1}}{\tau_{n,r}(\beta)}\sum_{m=0}^{r-1}\frac{\omega^{m(-k+1)}}{\nu_n^{(\beta-m)}}\sum_{j=1}^r 
   \left( \omega^{\ell+m+1}\right)^{j-1} \int_0^{\infty} p_n(x;\beta-m) x^{\ell} x^{\beta} e^{-x^r} \,dx.\]		
The second sum in this expression is
\[	\sum_{j=1}^r \left( \omega^{\ell+m+1}\right)^{j-1} = \left\{ \begin{matrix}
	0, &  \ell+m+1 \not\equiv 0 \mod r, \\
	r, & \ell+m+1 \equiv 0 \mod r.
	\end{matrix}  \right.\]
Therefore, we need to show that
\[	\int_0^{\infty} p_n(x;\beta-m) x^{rj-m-1} x^{\beta} e^{-x^r}\, dx = 0,  \qquad 1 \leq j \leq n,\]
which follows from (\ref{eq:orthopn}). For the normalization we need to show that
\begin{eqnarray*}
		1 &=& \sum_{j=1}^r \int_0^{\omega^{j-1}\infty} x^{rn} A_{\vec{n}+\vec{e}_k,j}(x;\beta) |x|^{\beta}e^{-x^r}\, dx\\
		&=& \frac{1}{\tau_{n,r}(\beta)}\frac{r}{\nu_n^{(\beta-r+1)}} \int_0^{\infty} p_n(x;\beta-r+1) x^{rn+r-1} x^{\beta-r+1} e^{-x^r}\, dx,
\end{eqnarray*}
and this follows from the explicit expression (\ref{eq:tau}) and (\ref{eq:nu}) for $\tau_{n,r}(\beta)$ and $\nu_n^{(\beta)}$ and the expression (\ref{eq:orthopn}) for the integral. 
\end{proof}

We also give an explicit expression for the type I Laguerre-Angelesco polynomials below the diagonal, i.e., for $A_{\vec{n}-\vec{e}_k,j}$ 
with $\vec{n}=(n,\ldots,n)$.

\begin{theorem}	\label{theorem:below}
For every $r>1$ and multi-index $\vec{n}=(n,\ldots,n)$ with $n>0$ we have
\[\gamma_{n,r}(\beta)A_{\vec{n}-\vec{e}_k,j}(x) = \omega^{j-1} \nu_{n-1}^{(\beta)} p_{n-1}(\omega^{-j+1}x;\beta-1)
	-\omega^{k-1} \nu_{n-1}^{(\beta-1)} p_{n-1}(\omega^{-j+1}x;\beta), \]
where $1\leq j,k \leq r $ and the normalizing constant $\gamma_{n,r}(\beta)$ is given by
\begin{equation}	\label{eq:coeffbelow}
	\gamma_{n,r}(\beta) = \frac{(n-1)!}{r^{n-1}\Gamma(\frac{\beta+n-1}{r}+1)}.
\end{equation}
\end{theorem}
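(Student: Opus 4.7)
The plan is to verify directly that the right-hand side, divided by $\gamma_{n,r}(\beta)$, meets all three defining conditions of the type I Laguerre-Angelesco vector for the multi-index $\vec{n}-\vec{e}_k$. Denote
\[
f_j(x) := \omega^{j-1}\nu_{n-1}^{(\beta)}\,p_{n-1}(\omega^{-j+1}x;\beta-1) - \omega^{k-1}\nu_{n-1}^{(\beta-1)}\,p_{n-1}(\omega^{-j+1}x;\beta).
\]
I would first check the degree conditions. Both summands are polynomials of degree $n-1$ in $x$, and their leading coefficients combine to $\nu_{n-1}^{(\beta)}\nu_{n-1}^{(\beta-1)}(\omega^{-j+1})^{n-1}(\omega^{j-1}-\omega^{k-1})$; this is nonzero precisely when $j\neq k$, so $\deg f_j = n-1$ for $j\neq k$ and $\deg f_k \leq n-2$. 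Since the entry of $\vec{n}-\vec{e}_k$ in position $j$ is $n$ for $j\neq k$ and $n-1$ for $j=k$, the required degree $n_j-1$ is satisfied in both cases.

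Next I would parametrize each ray by $x = \omega^{j-1}t$, $t\in(0,\infty)$, noting that $\omega^{-j+1}x = t$, $|x|^\beta = t^\beta$, $e^{-x^r}=e^{-t^r}$, and $dx=\omega^{j-1}dt$. Then
\[
\int_0^{\omega^{j-1}\infty} x^\ell f_j(x)|x|^\beta e^{-x^r}\,dx = \omega^{(\ell+1)(j-1)}\int_0^\infty t^{\ell+\beta} f_j(\omega^{j-1}t)\,e^{-t^r}\,dt.
\]
Expanding $f_j(\omega^{j-1}t)$ and summing over $j$, the dependence on $j$ factors out into two geometric sums:
\[
\sum_{j=1}^r \omega^{(\ell+2)(j-1)} \quad\text{and}\quad \sum_{j=1}^r \omega^{(\ell+1)(j-1)},
\]
which equal $r$ when $\ell\equiv -2\pmod{r}$ and $\ell\equiv -1\pmod{r}$ respectively, and vanish otherwise. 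This is exactly the kind of congruence selection used in the proofs of Theorems \ref{theorem:diagonal} and \ref{theorem:above}.

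For orthogonality I would run $\ell$ over $0,1,\ldots,rn-3$. The first geometric sum contributes only when $\ell=rm-2$ with $1\leq m \leq n-1$, yielding integrals of the form $\int_0^\infty t^{rm-1}\,p_{n-1}(t;\beta-1)\,t^{\beta-1}e^{-t^r}\,dt$, which vanish by Proposition \ref{proposition1} applied with parameter $\beta-1$. The second sum contributes only when $\ell=rm-1$ with $1\leq m\leq n-1$, giving integrals $\int_0^\infty t^{rm-1}\,p_{n-1}(t;\beta)\,t^\beta e^{-t^r}\,dt$, which vanish by (\ref{eq:orthopn}) directly. Finally, for the normalization step $\ell=rn-2$, only the first geometric sum contributes (since $rn-1\not\equiv 0\pmod r$), producing
\[
r\,\nu_{n-1}^{(\beta)}\int_0^\infty t^{r(n-1)+r-1}\,p_{n-1}(t;\beta-1)\,t^{\beta-1}e^{-t^r}\,dt = r\,\nu_{n-1}^{(\beta)}\cdot\frac{(n-1)!}{r^n},
\]
by (\ref{eq:normpn}). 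Substituting $\nu_{n-1}^{(\beta)} = 1/\Gamma(\tfrac{\beta+n-1}{r}+1)$ gives exactly $\gamma_{n,r}(\beta)$, so after dividing by $\gamma_{n,r}(\beta)$ the normalization equals $1$. Uniqueness of the type I system for a normal multi-index then identifies $f_j/\gamma_{n,r}(\beta)$ with $A_{\vec{n}-\vec{e}_k,j}$.

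The only mildly delicate point is bookkeeping the two residues $\ell\equiv -1,-2\pmod r$ and making sure each surviving integral is paired with the correct shifted parameter ($\beta-1$ versus $\beta$) so that the orthogonality relations (\ref{eq:orthopn}) apply cleanly; once that correspondence is set up the remaining computations are routine. No new identities beyond Proposition \ref{proposition1} are needed.
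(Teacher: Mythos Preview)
Your proposal is correct and follows essentially the same approach as the paper's proof: verify degree, orthogonality, and normalization directly, reducing via the root-of-unity sums to the two congruence classes $\ell\equiv -1,-2\pmod r$ and then invoking (\ref{eq:orthopn}) and (\ref{eq:normpn}) from Proposition~\ref{proposition1}. Your degree check is in fact slightly more explicit than the paper's (you write out the leading coefficient), but otherwise the argument is the same line for line.
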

\begin{proof}
Note that for $j\neq k$ the degree of $A_{\vec{n}-\vec{e}_k,j}$ is $n-1$, but for $j=k$ the leading term $x^{n-1}$ vanishes and the degree is $n-2$. For the orthogonality conditions and normalization, we need to verify that the following integral vanishes for $0\leq \ell \leq rn-3$ and equals $1$ 
for $\ell = rn-2$
\[	\sum_{j=1}^r \int_0^{\omega^{j-1}\infty} x^{\ell} A_{\vec{n}-\vec{e}_k,j}(x;\beta) |x|^{\beta}e^{-x^r}\, dx,\]
and this expression is equal to
\begin{multline*}
	\frac{1}{\gamma_{n,r}(\beta)}\int_0^{\infty} x^{\ell+\beta}e^{-x^r}\left( \nu_{n-1}^{(\beta)} \sum_{j=1}^r (\omega^{\ell+2})^{j-1} 
            p_{n-1}(x;\beta-1) \right.\\
	\left. - \omega^{k-1} \nu_{n-1}^{(\beta-1)} \sum_{j=1}^r (\omega^{\ell+1})^{j-1}p_{n-1}(x;\beta)  \right)\, dx.
\end{multline*}
The two sums involving the roots of unity $\omega$ are 
\[\sum_{j=1}^r (\omega^{\ell+2})^{j-1}= \left\{ \begin{matrix}
	0, & \ell +2 \not\equiv 0 \mod r, \\
	r, & \ell +2  \equiv 0 \mod r,
	\end{matrix} \right.\]
and 
\[	\sum_{j=1}^r (\omega^{\ell+1})^{j-1}= \left\{ \begin{matrix}
	0, & \ell +1 \not\equiv 0 \mod r, \\
	r, & \ell +1  \equiv 0 \mod r,
\end{matrix} \right.   \]
so the integral vanishes when $\ell +2 \not\equiv 0 \mod r$ and $\ell+1 \not\equiv 0 \mod r$. 
In case $\ell =rj-2$ for $1\leq j \leq n-1$ we need to verify
\[	\int_0^{\infty} p_{n-1}(x;\beta-1) x^{rj-2+\beta}e^{-x^r}\, dx = 0,  \]
which holds by (\ref{eq:orthopn}). In case $\ell=rj-1$ for $1\leq j \leq n-1$ we need 
\[	\int_0^{\infty} p_{n-1}(x;\beta)x^{rj-1+\beta}e^{-x^r}\,  dx=0,\]
and this again follows from (\ref{eq:orthopn}). For the normalization we need to show that 
\begin{eqnarray*}
		1 &=& \sum_{j=1}^r \int_0^{\omega^{j-1}\infty} x^{rn-2} A_{\vec{n}-\vec{e}_k,j}(x;\beta) |x|^{\beta}e^{-x^r}\,dx \\
		&=& \frac{r}{\gamma_{n,r}(\beta)}\nu_{n-1}^{(\beta)} \int_0^{\infty} p_{n-1}(x;\beta-1) x^{rn-2+\beta}e^{-x^r}\,dx,
\end{eqnarray*}
and this follows from the normalization of $p_n(x;\beta)$ in (\ref{eq:normpn}) and the explicit expressions of $\gamma_{n,r}(\beta)$ 
(\ref{eq:coeffbelow}) and $\nu_{n-1}^{(\beta)}$ (\ref{eq:nu}). 
\end{proof}
Similar as in the case $r=1$, see equation (\ref{JA-LA1}), one can show that the type I Laguerre-Angelesco polynomials $A_{\vec{n},j}(x;\beta)$ on the $r$-star (for any multi-index $\vec{n}$) are limiting cases of the type I Jacobi-Angelesco polynomials $B_{\vec{n},j}(x;\alpha,\beta)$ as follows
\[ 	A_{\vec{n},j}(x;\beta) = \lim_{\alpha\rightarrow \infty} \alpha^{-\frac{|\vec{n}|+\beta}{r}} B_{\vec{n},j}(\alpha^{-\frac{1}{r}}x;\alpha,\beta). \]
One can verify this by checking the orthogonality conditions and the normalization, or by taking the limit in the expression \eqref{pnJA} 
which gives \eqref{eq:pn}.

\subsubsection{Recurrence relation}
For any $r$, the nearest neighbor recurrence relations \cite{VA} for the type I Laguerre-Angelesco polynomials are given by
\begin{equation}	 \label{eq:nnrrI}
x A_{\vec{n},j}(x;\beta) = A_{\vec{n}-\vec{e}_k,j}(x;\beta) + b_{\vec{n}-\vec{e}_k,k}A_{\vec{n},j}(x;\beta) + \sum_{\ell=1}^r a_{\vec{n},\ell} A_{\vec{n}+\vec{e}_{\ell},j}(x;\beta), 
\end{equation}
for every $1\leq j,k\leq r$. There is a similar recurrence relation for the type II Laguerre-Angelesco polynomials, but with a shift in the coefficients $b_{\vec{n},k}$ (see Section \ref{section:rectypeII}). An explicit expression for the recurrence coefficients for the type I Laguerre-Angelesco polynomials near the diagonal is given by
\begin{proposition}	\label{prop:reccoef}
Let $\vec{n}=(n\ldots,n)$ be a diagonal multi-index for $r\geq 1$. Then 
\[	a_{\vec{n},k} = \frac{n}{r^2} \frac{\Gamma(\frac{n+\beta+1}{r})}{\Gamma(\frac{n+\beta-1}{r}+1)}\omega^{2(k-1)}.\]
Moreover, for $r>1$
\[	b_{\vec{n}-\vec{e}_k,k} = \frac{\Gamma(\frac{n+\beta-1}{r}+1)}{\Gamma(\frac{n+\beta-2}{r}+1)}\omega^{k-1}. \]
\end{proposition}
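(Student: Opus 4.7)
The approach is to compare coefficients of appropriate powers of $x$ on both sides of the nearest neighbor recurrence (\ref{eq:nnrrI}), after substituting the explicit formulas for the diagonal and near-diagonal type I polynomials from Theorems \ref{theorem:diagonal}, \ref{theorem:above}, and \ref{theorem:below}. All (sub)leading coefficients of the $p_n(x;\beta)$ appearing on the right-hand sides can be read off directly from the explicit form \eqref{eq:pn}, and the cyclotomic structure of the theorems means many terms in the cross-sums collapse via the identity $\sum_{\ell=1}^{r}\omega^{\ell s}=r$ if $s\equiv 0\pmod r$ and $0$ otherwise.

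For $a_{\vec{n},k}$, observe that $A_{\vec{n},j}$ has degree $n-1$, so $xA_{\vec{n},j}$ has degree exactly $n$; on the right-hand side, $A_{\vec{n}-\vec{e}_k,j}$ and $A_{\vec{n},j}$ have degree at most $n-1$, and by Theorem \ref{theorem:above} the only summand producing a degree-$n$ contribution is $a_{\vec{n},j}A_{\vec{n}+\vec{e}_j,j}$. Matching the coefficient of $x^n$ therefore isolates $a_{\vec{n},j}$. The required coefficients are $[x^{n-1}]A_{\vec{n},j}=\frac{r^{n-1}\omega^{(-j+1)(n-1)}}{(n-1)!\,\Gamma(\frac{\beta+n-1}{r}+1)}$ from Theorem \ref{theorem:diagonal} and $[x^n]A_{\vec{n}+\vec{e}_j,j}=\frac{r^{n+1}\omega^{(-j+1)(n+1)}}{n!\,\Gamma(\frac{\beta+n+1}{r})}$ from Theorem \ref{theorem:above} (with $k=j$ in its statement, only the $m=0$ summand survives in the internal cyclotomic sum, and $\tau_{n,r}(\beta)$ is inserted from \eqref{eq:tau}). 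Dividing one by the other yields the stated expression for $a_{\vec{n},k}$ after the simplification $\omega^{-2(-j+1)}=\omega^{2(j-1)}$.

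For $b_{\vec{n}-\vec{e}_k,k}$, specialize the recurrence to $j=k$. By Theorem \ref{theorem:below} the polynomial $A_{\vec{n}-\vec{e}_k,k}$ loses its leading term (the two pieces cancel when $j=k$) and has degree $n-2$, so it contributes nothing at $x^{n-1}$. Matching the coefficient of $x^{n-1}$ then gives
\[
[x^{n-2}]A_{\vec{n},k}
=b_{\vec{n}-\vec{e}_k,k}\,[x^{n-1}]A_{\vec{n},k}
+\sum_{\ell=1}^{r}a_{\vec{n},\ell}\,[x^{n-1}]A_{\vec{n}+\vec{e}_\ell,k}.
\]
The first two subleading coefficients follow directly from \eqref{eq:pn} and Theorem \ref{theorem:diagonal}. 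The sum on the right is the main obstacle: substituting Theorem \ref{theorem:above} and the formula for $a_{\vec{n},\ell}$ produces a double sum over $\ell$ and the internal index $m\in\{0,\ldots,r-1\}$, but after interchanging the order of summation the $\ell$-sum equals $r\,\omega^{-1}$ exactly when $m\equiv 1\pmod r$ and vanishes otherwise. This collapses the sum to a single term, and after invoking \eqref{eq:tau}, \eqref{eq:nu}, and the identity $(-k+1)(n-1)+(k-1)=(-k+1)(n-2)$ to align the surviving powers of $\omega$, the equation rearranges into the claimed closed form for $b_{\vec{n}-\vec{e}_k,k}$.
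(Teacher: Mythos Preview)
Your proposal is correct and follows essentially the same approach as the paper: both compute $a_{\vec{n},k}$ by matching the coefficient of $x^n$ in the recurrence (giving $a_{\vec{n},k}=\kappa_{\vec{n},k}/\kappa_{\vec{n}+\vec{e}_k,k}$), and both compute $b_{\vec{n}-\vec{e}_k,k}$ by matching the coefficient of $x^{n-1}$ with $j=k$, using the explicit leading and subleading coefficients from Theorems~\ref{theorem:diagonal} and~\ref{theorem:above}. The only difference is organizational: the paper separates the $\ell=k$ summand (contributing $\delta_{\vec{n}+\vec{e}_k,k}$) from the $\ell\neq k$ summands (contributing $\kappa_{\vec{n}+\vec{e}_\ell,k}$) before inserting the explicit formulas, whereas you keep the full sum over $\ell$, substitute the already-proven expression for $a_{\vec{n},\ell}$, and collapse the resulting double sum via the cyclotomic identity---but the underlying computation is the same.
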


\begin{proof}
If we write for any $\vec{n}=(n_1,\ldots,n_r)$
\[	A_{\vec{n},k}(x;\beta) = \kappa_{\vec{n},k} x^{n_k-1} + \delta_{\vec{n},k} x^{n_k-2}+\cdots ,\]
then for $\vec{n}=(n,\ldots,n)$ we get, by comparing the coefficient of $x^n$ in (\ref{eq:nnrrI}),
\[	a_{\vec{n},k} = \frac{\kappa_{\vec{n},k}}{\kappa_{\vec{n}+\vec{e}_k,k}},\]
and this ratio can be evaluated by using Theorem \ref{theorem:diagonal} which gives 
\[	\kappa_{\vec{n},k} = \frac{r^{n-1}}{(n-1)!}\frac{1}{\Gamma(\frac{n+\beta-1}{r}+1)} (\omega^{-k+1})^{n-1},\]
and Theorem \ref{theorem:above} gives
\[	\kappa_{\vec{n}+\vec{e}_k,k} = \frac{r^{n+1}}{n! \Gamma(\frac{n+\beta+1}{r})} (\omega^{-k+1})^{n+1}.\]
The coefficients $b_{\vec{n}-\vec{e}_k,k}$ can also be computed by comparing coefficients, but the computations are a bit longer and only the case $r>1$ is covered. For $r=1$, the computations are slightly different, but in this case the result is known because this corresponds to the Laguerre polynomials on $[0,\infty)$ (to be more precise, this corresponds to rescaled Laguerre polynomials). By comparing the coefficient of $x^{n-1}$ in (\ref{eq:nnrrI}), we obtain the expression for $b_{\vec{n}-\vec{e}_k,k}$
\[	b_{\vec{n}-\vec{e}_k,k} = \frac{\delta_{\vec{n},k}}{\kappa_{\vec{n},k}} - \frac{\delta_{\vec{n}+\vec{e}_k,k}}{\kappa_{\vec{n}+\vec{e}_k,k}} 
    - \sum_{l=1,l\neq k}^{r}\frac{\kappa_{\vec{n},l}}{\kappa_{\vec{n},k}}\frac{\kappa_{\vec{n}+\vec{e}_l,k}}{\kappa_{\vec{n}+\vec{e}_k,k}}.\]
From Theorem \ref{theorem:diagonal}, we have
\[	\delta_{\vec{n},k} = - \frac{r^{n-1}}{(n-2)!\Gamma(\frac{n+\beta-2}{r}+1)}\omega^{(-k+1)(n-2)},  \qquad n\geq 2,\]
and from Theorem \ref{theorem:above}
\[	\delta_{\vec{n}+\vec{e}_k,k} = -\frac{r^n}{(n-1)!\Gamma(\frac{n+\beta+1}{r})} \sum_{j=0}^{r-1} \frac{\Gamma(\frac{n+\beta-j}{r}+1)}
      {\Gamma(\frac{n+\beta-j-1}{r}+1)}\omega^{n(-k+1)},\]
and for $\ell \neq k$
\[	\kappa_{\vec{n}+\vec{e}_{\ell},k} = - \frac{r^n}{(n-1)!\Gamma(\frac{n+\beta+1}{r})}  \sum_{j=0}^{r-1} \frac{\Gamma(\frac{n+\beta-j}{r}+1)}{\Gamma(\frac{n+\beta-j-1}{r}+1)} \omega^{-{\ell}+1+(n-1)(-k+1)+j(k-{\ell})}.\]
Inserting this in the expression of $b_{\vec{n}-\vec{e}_k,k}$ gives the desired expression. 
\end{proof}

\subsubsection{Differential equation}
In this section we will give a linear differential equation of order $r+1$ for the polynomial $p_n(x;\beta)$ given in (\ref{eq:pn}). The differential equation is a combination of lowering and raising operators for these polynomials, i.e., differential operators that lower or raise the degree of the polynomial and raise/lower the parameter $\beta$.
\begin{lemma}
For the polynomials $p_n(x;\beta)$ given in (\ref{eq:pn}), one has for $\beta >-1$ the lowering operator
\begin{equation}	\label{eq:lower}
	p_n'(x;\beta) = np_{n-1}(x;\beta+1),
\end{equation}
and the raising operator
\begin{equation} 	\label{eq:raising}
	(x^{\beta}e^{-x^r}p_n(x;\beta))' = x^{\beta-1}e^{-x^r} \sum_{k=1}^r (-1)^k \binom{r}{k}r x^{r-k} p_{n+k}(x;\beta-k).
\end{equation}
\end{lemma}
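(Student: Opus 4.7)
The plan is to treat the lowering and raising operators separately. For \eqref{eq:lower}, I would differentiate the explicit series \eqref{eq:pn} term by term. The derivative sum starts at $k=1$; after reindexing $j=k-1$ and using $k\binom{n}{k}=n\binom{n-1}{k-1}$, the factor $n$ pulls out and the remaining sum is precisely the series for $p_{n-1}(x;\beta+1)$.

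For \eqref{eq:raising}, I would first apply the product rule to obtain
\[
 (x^{\beta}e^{-x^r}p_n(x;\beta))' = x^{\beta-1}e^{-x^r}\bigl[\beta p_n(x;\beta) + xp_n'(x;\beta) - rx^r p_n(x;\beta)\bigr].
\]
Using $(\beta+k)\,\Gamma\bigl(\tfrac{\beta+k}{r}\bigr) = r\,\Gamma\bigl(\tfrac{\beta+k}{r}+1\bigr)$ to merge the first two terms coefficient by coefficient, the bracket collapses to $r\bigl[p_n(x;\beta-r) - x^r p_n(x;\beta)\bigr]$. The raising operator thus reduces to the purely polynomial identity
\[
 p_n(x;\beta-r) = \sum_{k=0}^r (-1)^k\binom{r}{k} x^{r-k} p_{n+k}(x;\beta-k).
\]

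To prove this, I would first establish the elementary contiguity relation $xp_n(x;\beta) = p_n(x;\beta-1) + p_{n+1}(x;\beta-1)$ by direct coefficient comparison, using Pascal's rule $\binom{n+1}{j} = \binom{n}{j} + \binom{n}{j-1}$. A straightforward induction on $m$ then yields
\[
 x^m p_n(x;\beta) = \sum_{j=0}^m \binom{m}{j} p_{n+j}(x;\beta-m).
\]
Substituting this with $m=r-k$ into the right-hand side of the reduced identity, switching the order of summation via $i=k+j$, and applying the standard identity $\binom{r}{k}\binom{r-k}{i-k} = \binom{r}{i}\binom{i}{k}$ together with $\sum_{k=0}^i (-1)^k\binom{i}{k} = [i=0]$, the double sum collapses to the single term $p_n(x;\beta-r)$.

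The main obstacle is the collapse of that double sum; the underlying reason is that $\sum_{k=0}^r(-1)^k\binom{r}{k}x^{r-k}S^k$ is the binomial expansion of $(x-S)^r$, where $S$ denotes the shift $p_n(x;\beta)\mapsto p_{n+1}(x;\beta-1)$, and the contiguity relation says precisely that $x-S$ acts on the family $\{p_n(x;\beta)\}$ as the $\beta$-lowering shift $T\colon p_n(x;\beta)\mapsto p_n(x;\beta-1)$. Since $S$ and $T$ commute, iterating gives $(x-S)^r p_n(x;\beta) = T^r p_n(x;\beta) = p_n(x;\beta-r)$, which is the desired identity.
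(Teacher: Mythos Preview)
Your proof is correct, and for the raising operator it takes a genuinely different route from the paper. The paper argues indirectly: it writes the left-hand side as $x^{\beta-1}e^{-x^r}\pi_{n+r}(x)$, uses integration by parts together with the orthogonality relations \eqref{eq:orthopn} to show that $\pi_{n+r}$ satisfies $n+1$ orthogonality conditions with respect to $x^{\beta-r}e^{-x^r}$ on $[0,\infty)$, observes that the $r$ functions $x^{r-k}p_{n+k}(x;\beta-k)$ span the relevant $(r$-dimensional$)$ space of such polynomials of degree $n+r$, and finally determines the coefficients by matching the terms $x^{r-k}$. Your argument is instead purely algebraic: after the same product-rule reduction, you collapse $\beta p_n+xp_n'$ to $r\,p_n(x;\beta-r)$ via the Gamma recursion, and then prove the resulting polynomial identity directly from the one-step contiguity $xp_n(x;\beta)=p_n(x;\beta-1)+p_{n+1}(x;\beta-1)$ and a binomial convolution. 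What the paper's approach buys is conceptual: it explains \emph{why} the right-hand side must be a combination of exactly those $r$ polynomials, without any coefficient chasing. What your approach buys is that it is entirely elementary (no integrals, no appeal to orthogonality), it yields the coefficients automatically rather than leaving ``compare coefficients of $x^{r-k}$'' to the reader, and it holds as a formal identity in $\beta$ without the restriction $\beta>0$ needed for the paper's integration-by-parts step. Your shift-operator remark at the end---that the contiguity relation says $X=T+S$ on the two-parameter family, whence $(X-S)^r=T^r$---is a clean way to see why the double sum collapses, and is rigorous once one extends $S$ and $T$ linearly to finite combinations of the $p_n(x;\beta)$.
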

\begin{proof}
The lowering operator (\ref{eq:lower}) can easily be proven by differentiating (\ref{eq:pn}). To prove (\ref{eq:raising}) we first observe that
\[	\left[x^{\beta}e^{-x^r}p_n(x;\beta) \right]' = x^{\beta-1} e^{-x^r} \pi_{n+r}(x),\]
where $\pi_{n+r}$ is a polynomial of degree $n+r$ given by
\begin{equation}	\label{eq:2}
\pi_{n+r}(x) = (\beta-rx^r) p_n(x;\beta) + xp_n'(x;\beta).
\end{equation} 
Integrating both sides shows that for $\beta>0$
\begin{eqnarray*}
	\int_0^{\infty} x^{\beta-r} e^{-x^r} \pi_{n+r}(x) x^{r(j+1)-1} dx &=& \int_0^{\infty} 
                       \left[x^{\beta}e^{-x^r}p_n(x;\beta) \right]' x^{rj} dx \\
		&=& -rj \int_0^{\infty}x^{\beta}e^{-x^r}p_n(x;\beta) x^{rj-1} dx,
\end{eqnarray*}
and by (\ref{eq:orthopn}) this is zero for $1\leq j \leq n$ but also for $j=0$. So $\pi_{n+r}$ is a polynomial of degree $n+r$ which is orthogonal to all $x^{rj-1}$ for $1\leq j\leq n+1$ with weight $x^{\beta-r} e^{-x^r}$ on the interval $[0,\infty)$. These are $n+1$ orthogonality conditions. The $r$ polynomials $x^{r-k}p_{n+k}(x;\beta-k)$, $1\leq k \leq r$, have the same orthogonality conditions, they are all of degree $n+r$ and they are linearly independent. Hence, they span the linear space of polynomials of degree $n+r$ with the $n+1$ orthogonality conditions. Therefore
\begin{equation}	\label{eq:1}
	\pi_{n+r}(x) = \sum_{k=1}^r a_k x^{r-k} p_{n+k}(x;\beta-k),
\end{equation}
for some $a_k$, $k=1,\ldots,r$. To find these coefficients, one compares the coefficients of $x^{r-k}$ in (\ref{eq:1}) and (\ref{eq:2}). 	
\end{proof}
With these operators one can find the differential equation.

\begin{theorem}		\label{theorem:diffpn}
For any $n\in \mathbb{N}$ and $r\geq1$, the polynomial $y=p_n(x;\beta)$ satisfies the differential equation
\[	xy^{(r+1)} + (\beta+r)y^{(r)} + \sum_{k=0}^r c_{k,n} x^k y^{(k)} = 0,	\]
where
\begin{equation}	\label{eq:diffcoeff}
	c_{k,n} = (-1)^{r+1+k}\binom{r}{k}r (n-r+1)_{r-k}.
\end{equation}
\end{theorem}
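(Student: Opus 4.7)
The plan is to build the differential operator by composing the raising operator with the iterated lowering operator. Iterating \eqref{eq:lower} gives, for $0\le k\le r$,
\[
 p_n^{(r-k)}(x;\beta) = (n-r+k+1)_{r-k}\, p_{n-r+k}(x;\beta+r-k),
\]
and in particular $p_n^{(r)}(x;\beta) = (n-r+1)_r\, p_{n-r}(x;\beta+r)$. So $f(x):=p_{n-r}(x;\beta+r)$ is proportional to $p_n^{(r)}(x;\beta)$, and each of the polynomials $p_{n-r+k}(x;\beta+r-k)$ appearing on the right of the raising identity can be rewritten as a derivative of $p_n(x;\beta)$.

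Next I would apply the raising operator \eqref{eq:raising} to $f$, that is, with $n$ replaced by $n-r$ and $\beta$ by $\beta+r$:
\[
 \bigl(x^{\beta+r}e^{-x^r}f(x)\bigr)' = x^{\beta+r-1}e^{-x^r}\sum_{k=1}^r (-1)^k \binom{r}{k} r\, x^{r-k}\, p_{n-r+k}(x;\beta+r-k).
\]
Expanding the left-hand side by the product rule and cancelling the common factor $x^{\beta+r-1}e^{-x^r}$ yields
\[
 x f'(x) + (\beta+r) f(x) - r\, x^r f(x) = \sum_{k=1}^r (-1)^k \binom{r}{k} r\, x^{r-k}\, p_{n-r+k}(x;\beta+r-k).
\]
Now substitute $f=p_n^{(r)}/(n-r+1)_r$ and $p_{n-r+k}(x;\beta+r-k)=p_n^{(r-k)}(x;\beta)/(n-r+k+1)_{r-k}$, then multiply through by $(n-r+1)_r$. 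Using the telescoping identity $(n-r+1)_r/(n-r+k+1)_{r-k} = (n-r+1)_k$, this collapses to
\[
 x\, p_n^{(r+1)} + (\beta+r)\, p_n^{(r)} - r\, x^r p_n^{(r)} = \sum_{k=1}^r (-1)^k \binom{r}{k} r\, (n-r+1)_k\, x^{r-k}\, p_n^{(r-k)}(x;\beta).
\]

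Finally I would re-index $j=r-k$ in the right-hand sum and move it to the left-hand side; combined with the isolated term $-r x^r p_n^{(r)}$ (which corresponds to the $k=r$ summand with coefficient $-r$), this produces
\[
 x\, p_n^{(r+1)} + (\beta+r)\, p_n^{(r)} + \sum_{k=0}^{r} c_{k,n}\, x^k p_n^{(k)} = 0,
\]
and a quick sign check using $(-1)^{r+1+k}=-(-1)^{r-k}$ identifies $c_{k,n}=(-1)^{r+1+k}\binom{r}{k}r(n-r+1)_{r-k}$, which matches \eqref{eq:diffcoeff} (noting $(n-r+1)_0=1$ handles the $k=r$ term). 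The one step that requires genuine care is Step 2, where the raising operator produces a polynomial factor $-rx^r$ on the left that must be matched against the $k=r$ term on the right; the rest is pure Pochhammer bookkeeping, so the main obstacle is really just keeping track of signs and shifts of $\beta$ and $n$ accurately.
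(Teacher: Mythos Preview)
Your proof is correct and follows essentially the same route as the paper: iterate the lowering operator to express $p_n^{(r)}$ as a multiple of $p_{n-r}(x;\beta+r)$, apply the raising operator at the shifted parameters, then use the lowering identity again to rewrite each $p_{n-r+k}(x;\beta+r-k)$ as a constant times $p_n^{(r-k)}(x;\beta)$. The only cosmetic difference is that the paper phrases the raising step as ``multiply by $x^{\beta+r}e^{-x^r}$ and differentiate'' rather than quoting \eqref{eq:raising} with shifted parameters and expanding the left side, but these are the same computation.
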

\begin{proof}
From the lowering operator (\ref{eq:lower}) one has
\[	p_n^{(r)}(x;\beta) = \frac{n!}{(n-r)!}p_{n-r}(x;\beta+r).\]
Multiplying both sides by $x^{\beta+r}e^{-x^r}$ and differentiating gives
\begin{multline*}
x^{\beta+r}e^{-x^r} p_{n}^{(r+1)}(x;\beta) + x^{\beta+r-1} e^{-x^r} [\beta+r-rx^r] p_n^{(r)}(x;\beta) \\
= \frac{n!}{(n-r)!} x^{\beta+r-1} e^{-x^r} \sum_{k=1}^r (-1)^k \binom{r}{k}r x^{r-k} p_{n-r+k}(x;\beta+r-k),
\end{multline*}
where we used the raising operator (\ref{eq:raising}). Using the lowering operator (\ref{eq:lower}) one has
\[	p_{n-r+k}(x;\beta+r-k) = \frac{(n-r+k)!}{n!} p_n^{(r-k)}(x;\beta),\]
hence we have
\[	x p_{n}^{(r+1)}(x;\beta) +  (\beta+r-rx^r) p_n^{(r)}(x;\beta) 
-\sum_{k=1}^r (-1)^k \binom{r}{k}r \frac{(n-r+k)!}{(n-r)!} x^{r-k}p_n^{(r-k)}(x;\beta)=0,\]
or 
\[	xp_n^{(r+1)}(x;\beta) + (\beta+r)p_n^{(r)}(x;\beta) + \sum_{k=0}^rc_{k,n} x^k p_n^{(k)}(x;\beta) = 0.\]
\end{proof}

\subsubsection{Asymptotic zero behavior}
Now we investigate the asymptotic behavior of the zeros of the type I Laguerre-Angelesco polynomials $A_{\vec{n},j}(x;\beta)$, $1\leq j \leq r$, for the multi-index $\vec{n}=(n,n,\ldots,n)$ and $n\rightarrow\infty$. From Theorem \ref{theorem:diagonal} it is clear that the zeros of $A_{\vec{n},j}$ are copies of the zeros of $p_n$ (which are on $(0,\infty)$, see Lemma \ref{lemma1}), but rotated to the interval $(0,\omega^{j-1}\infty)$. Hence, we only need to investigate the behavior of the zeros of $p_n$ given in (\ref{eq:pn}). First we prove that the zeros of $p_n$ are all in $(0,\infty)$ whenever $\beta>-1$.
\begin{lemma}	\label{lemma1}
Let $\beta>-1$, then all the zeros of $p_n(x;\beta)$ given in (\ref{eq:pn}) are simple and lie in the open interval $(0,\infty)$.
\end{lemma}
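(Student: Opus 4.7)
The plan is to use the standard sign-change argument from the theory of orthogonal polynomials, exploiting that (\ref{eq:orthopn}) furnishes $n$ orthogonality conditions against the specific monomials $x^{rj-1}$, $1\leq j\leq n$, and not against arbitrary polynomials of degree $<n$.

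First I would let $y_1 < y_2 < \cdots < y_m$ be the points of $(0,\infty)$ at which $p_n(x;\beta)$ changes sign, and assume for contradiction that $m \leq n-1$. I would then introduce the test polynomial
\[
  Q(x) \;=\; x^{r-1}\prod_{i=1}^{m}(x^{r}-y_i^{r}).
\]
The structural observation is that $\prod_{i=1}^{m}(x^{r}-y_i^{r})$ is a polynomial in $x^{r}$ of degree $m$, hence $Q$ is a linear combination of $x^{r-1}, x^{2r-1},\ldots, x^{(m+1)r-1}$, i.e.\ of the monomials $x^{rj-1}$ with $1\leq j\leq m+1 \leq n$. Orthogonality (\ref{eq:orthopn}) therefore gives
\[
  \int_{0}^{\infty} Q(x)\, p_n(x;\beta)\, x^{\beta}e^{-x^{r}}\,dx \;=\; 0.
\]

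Next I would show that this same integral is strictly nonzero, yielding the contradiction. On $(0,\infty)$ the factor $x^{r-1}$ is strictly positive, and $\prod_{i=1}^{m}(x^{r}-y_i^{r})$ changes sign at exactly the same points $y_1,\ldots,y_m$ where $p_n(x;\beta)$ changes sign. After choosing the overall sign of $Q$, the product $Q(x)\,p_n(x;\beta)$ is nonnegative and not identically zero on $(0,\infty)$; together with the strictly positive weight $x^{\beta}e^{-x^{r}}$ (integrable at $0$ since $\beta>-1$, integrable at $\infty$ because of $e^{-x^{r}}$), the integral is strictly positive. This contradicts the vanishing obtained from the orthogonality.

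Hence $m\geq n$. Since $p_n(x;\beta)$ has degree $n$, this forces $m=n$, all sign changes are simple zeros, and all $n$ zeros of $p_n(x;\beta)$ lie in $(0,\infty)$. I do not anticipate a genuine obstacle; the only care needed is in identifying the monomial content of $Q$ with the family $\{x^{rj-1}\}$ against which orthogonality is actually prescribed, and then matching the sign pattern of $Q$ to that of $p_n$ on $(0,\infty)$.
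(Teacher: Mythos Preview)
Your argument is correct and is essentially identical to the paper's proof: the paper also assumes fewer than $n$ sign changes, builds the auxiliary polynomial $q_m(x)=\prod_{i=1}^{m}(x^r-x_i^r)$, and observes that $x^{r-1}q_m(x)$ is a combination of the monomials $x^{rj-1}$, $1\le j\le m+1\le n$, so that orthogonality forces $\int_0^\infty p_n(x;\beta)q_m(x)\,x^{\beta+r-1}e^{-x^r}\,dx=0$ while the constant sign of $p_n q_m$ on $(0,\infty)$ makes it nonzero. The only cosmetic difference is that you fold the factor $x^{r-1}$ into the test polynomial $Q$ rather than into the weight.
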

\begin{proof}
The proof is a modification of the zero location for usual orthogonal polynomials, see e.g., \cite[Thm. 3.3.1]{Szego}.
Suppose $x_{1},\ldots,x_m$ are the zeros of odd multiplicity of $p_n(x;\beta)$ that lie in $(0,\infty)$ and that $m<n$. Then consider the polynomial $q_m(x) = (x^r-x_1^r)\cdots(x^r-x_m^r)$. This is a polynomial of degree $rm$ with $m$ real zeros on $(0,\infty)$ at the points $x_1,\ldots,x_m$ and no other sign changes on $(0,\infty)$. Hence, the product $p_n(x;\beta)q_m(x)$ has a constant sign on $(0,\infty)$ so that 
\[	\int_0^{\infty} p_n(x;\beta)q_m(x) x^{\beta+r-1}e^{-x^r} dx \neq 0.\]
But $q_m(x) x^{r-1}$ contains only powers $x^{rj-1}$ with $1\leq j \leq m+1 \leq n$, hence by (\ref{eq:orthopn}) this integral is zero. This contradiction implies that $m\geq n$ and since $p_n$ is of degree $n$, we see that $m=n$. 
\end{proof}
The above lemma states that the zeros of $p_n(x;\beta)$ lie on the positive half-line. Moreover, they will tend to $\infty$ as $n\rightarrow \infty$. To be able to say something about the asymptotic behavior of the zeros, we will obtain a proper rescaling such that the zeros lie in a compact interval. Note that for $r=1$, we have the classical Laguerre polynomials. In that case, the rescaling is known to be $4n$, since the largest zero of the Laguerre polynomial grows as $4n$. For any $r\geq 1$, we first show that after a rescaling 
\[	x = \alpha_r n^{1/r} z,\]
for some constant $\alpha_r>0$ (which only depends on $r$), the zeros will lie in a compact interval for $n$ large enough. This will be shown by using the infinite-finite range inequality. This inequality says that there is a neighborhood where the $L^1(\mathbb{R}^+)$-norm lives and only a small fraction of the norm lies outside this neighborhood. To make sure this compact interval is always $[0,1]$, we show that the largest zero (as $n$ tends to infinity) grows like $\left(\frac{r+1}{r} \right)^{\frac{r+1}{r}}n^{\frac1r}$, which gives the proper rescaling factor.

\begin{lemma}   \label{lemma:ifri}
For $n$ large enough, the zeros of the polynomials $p_n$ lie in the interval
\[	V_{n,r}:= \left[0,(r+1)^{1/r}\left(\frac{r+1}{r}\right)^{\frac{r+1}{r}}n^{1/r}\right].	\]
\end{lemma}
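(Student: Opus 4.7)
As announced by the authors just before the lemma, the proof rests on the Mhaskar--Saff infinite-finite range inequality for Freud-type weights on $[0,\infty)$: for a weight $w(x)=e^{-Q(x)}$ and a polynomial $R$ of degree $\le N$,
\[
\int_{0}^{\infty}|R(x)|\,w(x)\,dx\;\le\;\bigl(1+o(1)\bigr)\!\int_{0}^{a_{N}}|R(x)|\,w(x)\,dx,
\]
with a Mhaskar--Rakhmanov--Saff number $a_{N}\sim s_{c}N^{1/r}$. I argue by contradiction: a zero of $p_{n}$ beyond $V_{n,r}$ lets me build a test polynomial whose weighted integral vanishes by the orthogonality (\ref{eq:orthopn}) but which the inequality forces to be nonzero.

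\textbf{Identifying the constant $s_{c}$.} For $Q(x)=x^{r}$ (the $x^{\beta}$ is a lower-order perturbation), the cleanest derivation is a saddle-point analysis of a contour-integral representation of $p_{n}$. Inserting Hankel's formula $1/\Gamma(z+1)=\tfrac{1}{2\pi i}\oint e^{t}t^{-z-1}\,dt$ into (\ref{eq:pn}) and substituting $t=u^{r}$ gives
\[
p_{n}(x;\beta)=\frac{r}{2\pi i}\oint e^{u^{r}}\,(u-x)^{n}\,u^{-\beta-n-1}\,du.
\]
Scaling $x=sn^{1/r}$, $u=vn^{1/r}$ turns this into a Laplace-type integral whose saddles satisfy $rv^{r}(s-v)=s$ and coalesce at $v=rs/(r+1)$, which happens exactly at $s=s_{c}:=((r+1)/r)^{(r+1)/r}$. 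The same constant is obtained by solving the Mhaskar--Saff equilibrium problem with external field $x^{r}$ on $[0,\infty)$; after rescaling $x=a_{N}t$ the defining identity reduces to a Beta-function evaluation whose solution is $a_{N}=s_{c}N^{1/r}(1+o(1))$.

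\textbf{The contradiction --- this is where the slack $(r+1)^{1/r}$ arises.} Assume $p_{n}$ has a zero $x_{0}>(r+1)^{1/r}s_{c}n^{1/r}$. By Lemma~\ref{lemma1}, $p_{n}$ has $n$ simple positive zeros $x_{1}<\cdots<x_{n}=x_{0}$. Following the template of the proof of Lemma~\ref{lemma1}, introduce
\[
q(x)=\prod_{i=1}^{n-1}\bigl(x^{r}-x_{i}^{r}\bigr),\qquad \deg q=r(n-1).
\]
Since $q(x)x^{r-1}$ is a linear combination of $x^{rj-1}$ for $1\le j\le n$, the orthogonality (\ref{eq:orthopn}) yields
\[
\int_{0}^{\infty}p_{n}(x)\,q(x)\,x^{r-1}\,x^{\beta}e^{-x^{r}}\,dx=0,
\]
while on $(0,\infty)$ the integrand changes sign only at $x_{0}$ (the zeros of $p_{n}$ and $q$ cancel in pairs on $(0,x_{0})$). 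The polynomial part $p_{n}(x)q(x)x^{r-1}$ has degree $n+r(n-1)+(r-1)=(r+1)n-1$, so the relevant Mhaskar--Saff scale is $a_{(r+1)n}\sim s_{c}((r+1)n)^{1/r}=(r+1)^{1/r}s_{c}n^{1/r}$, which is exactly the endpoint of $V_{n,r}$. Since $x_{0}>a_{(r+1)n}(1+o(1))$, the inequality bounds the weighted $L^{1}$-mass on $(x_{0},\infty)$ by $o(1)$ times the total, and via the orthogonality the $L^{1}$-mass on $(0,x_{0})$ is bounded by the same quantity; but that latter mass is at least $\int_{0}^{a_{(r+1)n}}|p_{n}q\,x^{r-1}|x^{\beta}e^{-x^{r}}dx$, which is a positive constant fraction of the total, a contradiction for $n$ large.

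\textbf{Main obstacle.} The technical content lives almost entirely in the Mhaskar--Saff inequality with the correct asymptotic constant $s_{c}$ for the half-line weight $x^{\beta}e^{-x^{r}}$; the $x^{\beta}$ perturbation must be handled uniformly so that the leading MRS asymptotics are dictated solely by $Q(x)=x^{r}$. Once $a_{N}=s_{c}N^{1/r}(1+o(1))$ is in hand, the degree bookkeeping above is purely mechanical and explains the slack factor cleanly: $(r+1)^{1/r}$ is nothing but $((r+1)n)^{1/r}/n^{1/r}$, the MRS rescaling when the test polynomial has degree $(r+1)n$ instead of $n$.
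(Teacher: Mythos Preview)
Your overall strategy coincides with the paper's: construct a test polynomial from the zeros of $p_n$, use the orthogonality \eqref{eq:orthopn} to make a weighted integral vanish, and derive a contradiction via the infinite-finite range inequality. The paper builds $q_m$ from the zeros lying inside $V_{n,r}$, whereas you use all zeros except the largest; both choices produce a test polynomial of degree at most $(r+1)n$, so this difference is cosmetic.

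The real problem is your identification of the Mhaskar--Rakhmanov--Saff number. Your saddle-point computation locates the edge of the \emph{asymptotic zero distribution of $p_n$} and indeed gives $s_c=((r+1)/r)^{(r+1)/r}$ --- but that is \emph{not} the right endpoint of the equilibrium support $\mathcal{S}_w$ for the external field $Q(x)=x^r$ on $[0,\infty)$. The latter is $b_r=2r^{-1/r}\binom{r-1/2}{\lfloor r/2\rfloor}^{-1/r}$, which is strictly smaller (for $r=1$ one has $b_1=2$ versus $s_1=4$). The paper never claims $\mathcal{S}_w=[0,s_c]$; after rescaling it takes $V_r^*=[0,s_c]$ merely as a convenient \emph{neighborhood} of $\mathcal{S}_w=[0,b_r]$. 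With your (incorrect) value $a_{(r+1)n}=s_c((r+1)n)^{1/r}$, the hypothesis $x_0>(r+1)^{1/r}s_c\,n^{1/r}$ gives only $x_0>a_{(r+1)n}$ with no gap, and the infinite-finite range inequality needs a \emph{fixed} neighborhood of $\mathcal{S}_w$ to produce the exponential decay you invoke --- so as written your contradiction does not close. With the correct value $a_{(r+1)n}=b_r((r+1)n)^{1/r}$ one gets $x_0>(s_c/b_r)\,a_{(r+1)n}$ with $s_c/b_r>1$ a fixed constant, and then everything works. Accordingly, your final sentence misreads the origin of the constant: the endpoint $(r+1)^{1/r}s_c\,n^{1/r}$ is the degree-bump factor $(r+1)^{1/r}$ times a neighborhood radius $s_c$ chosen strictly larger than $b_r$, not the MRS number itself.
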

\begin{proof}
Denote $0<x_{1,n}<\cdots<x_{m,n}$ the zeros of $p_n$ in $V_{n,r}$ and assume that $m<n$ so that there are zeros that lie outside this interval. Define a new polynomial
\[	q_m(x)= (x^r-x_{1,n}^r)\cdots(x^r-x_{m,n}^r),	\]
which is of degree $rm<rn$.	Since the zeros of $p_n$ are simple (Lemma \ref{lemma1}), the product $p_nq_m$ does not change sign on $V_{n,r}$. Moreover, since $q_m$ is of degree $rm <rn$ we have, due to the orthogonality of $p_n$,
\begin{equation}	\label{eq:orthopq}
	0  = \int_0^{\infty} p_n(x) q_m(x) x^{\beta+r-1} e^{-x^r} dx.  
\end{equation}
Now consider the weight function $w(x) = e^{-x^r}$ on $[0,\infty)$, then the support of the equilibrium measure $\mu_w$ corresponding to this weight function is given by
\[	\mathcal{S}_w = \left[0,\frac{2}{r^{1/r}} \binom{r-1/2}{\lfloor\frac{r}{2}\rfloor}^{-1/r} \right],	\] 
see \cite{SaffTotik} for the computations. Then the infinite-finite range inequality \cite[Thm. 6.1 in Ch. III]{SaffTotik},
\cite[Thm. 4.1]{LevinLub} tells us that for a polynomial $P_N$ of degree at most $N$ the following $L^1(\mathbb{R}^+)$-norm lives on any neighborhood 
$\mathcal{N}$ of $\mathcal{S}_w$
\[    \int_{\mathbb{R}^*\setminus \mathcal{N}} |P_N(x)| e^{-Nx^r} x^\beta\, dx \leq De^{-dn} \int_{\mathcal{N}} |P_N(x)| e^{-Nx^r} x^\beta\, dx, \]
where $D$ and $d$ are two positive constants, independent of $N$ and $P_N$. Choose $N$ large enough so that $De^{-dn} < 1$, then
\begin{equation}	\label{eq:ifri}
	\int_{\mathbb{R}^+\setminus \mathcal{N}} |P_N(x)|e^{-Nx^r}x^{\beta}\, dx < \int_{\mathcal{N}} |P_N(x)|e^{-Nx^r}x^{\beta}\, dx.
\end{equation}
To apply the infinite-finite range inequality (\ref{eq:ifri}) on the integral (\ref{eq:orthopq}), set $N=n(r+1)$ and
\[	P_N(y) = p_n(n^{1/r}(r+1)^{1/r}y)q_m(n^{1/r}(r+1)^{1/r}y)y^r.	\]
Then by rescaling $x= n^{1/r}(r+1)^{1/r}y$ the integral (\ref{eq:orthopq}) becomes
\[	0= \int_{V_{r}^*} P_N(y) y^{\beta-1} e^{-Ny^r}\, dy +\int_{\mathbb{R}^+\setminus V^*_r} P_N(y) y^{\beta-1} e^{-Ny^r}\, dy,\]
where $V_{r}^* = \left[0,\left(\frac{r+1}{r}\right)^{\frac{r+1}{r}}\right]$ and one can check that this set is a neighborhood of $\mathcal{S}_w$. Since $p_nq_m$ does not change sign on $V_{n,r}$, $P_N$ has the same sign on $V_{r}^* = n^{-1/r}(r+1)^{-1/r}V_{n,r}$. Suppose that $P_N$ is positive on this set, then by the infinite-finite range inequality (\ref{eq:ifri})
\begin{eqnarray*}
		0&=&\int_{V_{r}^*} |P_N(y)| y^{\beta-1} e^{-Ny^r}\, dy +\int_{\mathbb{R}^+\setminus V^*_{r}} P_N(y) y^{\beta-1} e^{-Ny^r}\, dy \\
		&>&\int_{V_{r}^*} |P_N(y)| y^{\beta-1} e^{-Ny^r}\, dy - \int_{V_{r}^*} |P_N(y)| y^{\beta-1} e^{-Ny^r}\, dy = 0,
\end{eqnarray*}
which gives a contradiction, so $m\geq n$ and since $\deg p_n = n$ we have $m=n$. Similarly for $P_N$ negative on $V_{r}^*$, we obtain a contradiction so that $m=n$. 
\end{proof}
Note that the previous lemma does not give the smallest possible interval that contains the zeros since this set was chosen to suit the neighborhood $V_{r}^*$ and the polynomial $P_N$. Indeed, for $r=1$ we obtain the interval $V_{n,1} = [0,8n]$ but we know that the zeros lie in the interval $[0,4n]$ for $n$ large enough. Since the zeros of $p_n$ lie in an interval $[0,\alpha_r n^{1/r}]$ for a constant $\alpha_r>0$, we only need to determine the smallest possible $\alpha_r$. To obtain this optimal $\alpha_r$, we use the rescaling
\[x= \alpha_r n^{\frac{1}{r}} z,\]
such that the zeros of $p_n$ lie in the compact interval $[0,1]$ and obtain a growing rate for the largest zero of $p_n$. The inverse of this growth rate will determine the optimal $\alpha_r$.

From here on, we take a large enough $N$ such that for all $n\geq N$ all the zeros of the rescaled $p_n$ lie in $[0,1]$. Hence, define the polynomial
\[ 	\tilde{p}_n(x;\beta) := p_n\left( \alpha_r n^{\frac{1}{r}} x;\beta \right). \]
Then we will analyze the behavior of the zeros of these new polynomials. Note that from the differential equation of $p_n(x;\beta)$ in Theorem \ref{theorem:diffpn}, one can derive the following differential equation for $\tilde{p}_n$
\begin{equation}
\label{eq:difftildep}
x \tilde{p}_n^{(r+1)}(x;\beta) + (\beta+r) \tilde{p}_n^{(r)}(x;\beta) + \alpha_r^r n  \sum_{k=0}^r c_{k,n} x^k \tilde{p}_n^{(k)}(x;\beta)=0.  
\end{equation}
Denote the zeros of $\tilde{p}_n(x;\beta)$ by $0< x_{1,n} <  \cdots < x_{n,n} $ and consider the normalized zero counting measure given by
\[	\mu_n = \frac{1}{n} \sum_{j=1}^n \delta_{x_{j,n}}.\]
Then $(\mu_n)_n$ is a sequence of probability measures on $[0,1]$, and by Helley's selection principle it will contain a subsequence $(\mu_{n_k})_k$ that converges weakly to a probability measure $\mu$ on $[0,1]$. If this limit is independent of the subsequence, then we call it the asymptotic zero distribution of the zeros of $\tilde{p}_n(x;\beta)$. We will investigate this by means of the Stieltjes transform
\[	S_n(z) = \int_0^{1} \frac{d\mu_n(x)}{z-x} = \frac{1}{n}\frac{\tilde{p}_n'(z;\beta)}{\tilde{p}_n(z;\beta)}, \qquad S(z) = \int_0^{1} \frac{d\mu(x)}{z-x}, \qquad z \in \mathbb{C}\setminus [0,1],\]
and use the Grommer-Hamburger theorem \cite{Geronimo} which says that $\mu_n$ converges weakly to $\mu$ if and only if $S_n$ converges uniformly on compact subsets of $\mathbb{C}\setminus [0,1]$ to $S$ and $zS \rightarrow 1$ as $z \rightarrow \infty$.

First we show that the weak limit of $(\mu_{n_k})_k$ has a Stieltjes transform $S$ which satisfies an algebraic equation of order $r+1$. 
\begin{proposition}
Suppose that $\mu_{n_k}$ converges weakly to $\mu$, then the Stieltjes transform $S$ of $\mu$ satisfies
\begin{equation}	\label{eq:algS}
	zS^{r+1} - r\alpha_r^r(zS-1)^r=0.
\end{equation}
\end{proposition}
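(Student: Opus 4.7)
The plan is to derive the algebraic equation (\ref{eq:algS}) by taking the asymptotic limit of the differential equation (\ref{eq:difftildep}) for $\tilde{p}_n(z;\beta)$ after dividing through by $n^{r+1}\tilde{p}_n(z;\beta)$. Fix a compact set $K\subset\mathbb{C}\setminus[0,1]$. By Lemma \ref{lemma:ifri} (combined with the chosen rescaling $x=\alpha_r n^{1/r}z$), for $n$ large enough all zeros $x_{j,n}$ of $\tilde{p}_n(\cdot;\beta)$ lie in $[0,1]$, so $\tilde{p}_n(z;\beta)$ is nonvanishing on $K$. Since by assumption $\mu_{n_k}\to\mu$ weakly, Vitali's theorem together with the uniform bound $|S_n(z)|\le 1/\mathrm{dist}(K,[0,1])$ gives $S_{n_k}\to S$ uniformly on $K$.

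The key auxiliary fact I need is the factorization identity
\[
\frac{\tilde{p}_n^{(k)}(z;\beta)}{\tilde{p}_n(z;\beta)} = k!\,e_k\!\left(\frac{1}{z-x_{1,n}},\ldots,\frac{1}{z-x_{n,n}}\right),\qquad 0\le k\le n,
\]
where $e_k$ is the $k$-th elementary symmetric polynomial in the Cauchy kernels. Expressing $k!e_k$ as a polynomial in the power sums $p_i(z)=\sum_j(z-x_{j,n})^{-i}$ via Newton's identities, the only term of top order in $n$ is $p_1(z)^k=(nS_n(z))^k$: every other summand is a product $p_{i_1}\cdots p_{i_\ell}$ with $i_1+\cdots+i_\ell=k$ but with $\ell\le k-1$ factors, and since each $|p_i(z)|$ is bounded by $n/\mathrm{dist}(K,[0,1])^i$, such terms contribute at most $O(n^{k-1})$. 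Consequently
\[
\frac{1}{n^k}\frac{\tilde{p}_n^{(k)}(z;\beta)}{\tilde{p}_n(z;\beta)} = S_n(z)^k + O\!\left(\frac{1}{n}\right) \longrightarrow S(z)^k
\]
uniformly on $K$, for each $0\le k\le r+1$.

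Dividing the differential equation (\ref{eq:difftildep}) by $n^{r+1}\tilde{p}_n(z;\beta)$ yields
\[
z\cdot\frac{\tilde{p}_n^{(r+1)}}{n^{r+1}\tilde{p}_n} + \frac{\beta+r}{n}\cdot\frac{\tilde{p}_n^{(r)}}{n^r\tilde{p}_n} + \alpha_r^r\sum_{k=0}^{r}\frac{c_{k,n}}{n^{r-k}}\,z^k\cdot\frac{\tilde{p}_n^{(k)}}{n^k\tilde{p}_n}=0.
\]
The middle term carries an extra factor $1/n$ and vanishes in the limit, while $c_{k,n}/n^{r-k}\to(-1)^{r+1+k}r\binom{r}{k}$ by (\ref{eq:diffcoeff}). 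Passing to the limit along the subsequence $n=n_k$ therefore gives
\[
zS(z)^{r+1} + r\alpha_r^{r}(-1)^{r+1}\sum_{k=0}^{r}\binom{r}{k}(-zS(z))^k = 0,
\]
and the binomial theorem collapses the sum into $(-1)^{r+1}(1-zS(z))^r = -(zS(z)-1)^r$, producing exactly (\ref{eq:algS}).

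The principal technical obstacle I anticipate is the uniform error estimate $O(n^{-1})$ in the ratios $n^{-k}\tilde{p}_n^{(k)}/\tilde{p}_n\to S^k$, since this must be checked for every $k$ up to $r+1$ simultaneously on the compact set $K$. Once the zero-localization from Lemma \ref{lemma:ifri} confines all $x_{j,n}$ to $[0,1]$ so that each Cauchy kernel $1/(z-x_{j,n})$ is uniformly bounded on $K$, this becomes a controlled Newton-identity expansion; the remainder of the argument is then pure bookkeeping with the explicit coefficients $c_{k,n}$ and a single application of the binomial identity.
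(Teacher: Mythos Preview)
Your argument is correct and follows the same overall strategy as the paper: divide the rescaled differential equation \eqref{eq:difftildep} by $n^{r+1}\tilde p_n$, identify the limit of each term, and apply the binomial theorem. The only substantive difference lies in how you justify the key asymptotic $n^{-k}\tilde p_n^{(k)}/\tilde p_n\to S^k$. The paper proceeds analytically: it writes $\tilde p_n^{(k)}=n^k\tilde p_n\bigl[S_n^k+\tfrac1n G_{n,k}(S_n,S_n',\dots,S_n^{(k-1)})\bigr]$ via a recursion in the derivatives of $S_n$, and then invokes the fact that uniform convergence of holomorphic functions on compacta forces convergence of all derivatives. You instead take the combinatorial route through $\tilde p_n^{(k)}/\tilde p_n=k!\,e_k$ and Newton's identities, bounding the lower-order power-sum products directly using the confinement of the zeros. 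Both are valid; your version is slightly more elementary and yields explicit $O(1/n)$ error bounds, while the paper's recursion is the standard device in this literature and transfers verbatim to settings where the zeros are not known to be real or simply located (one only needs normal-family arguments for $S_n$). One minor remark: Lemma~\ref{lemma:ifri} with the rescaling $\alpha_r$ actually places the zeros in $[0,(r+1)^{1/r}]$ rather than $[0,1]$, but this does not affect your bounds---just replace $[0,1]$ by that fixed interval when choosing $K$.
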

\begin{proof}
Since the Stieltjes transform $S_n$ of $\mu_n$ satisfies
\[	\tilde{p}_n'(z;\beta) = n \tilde{p}_n(z;\beta) S_n(z),\]
one can prove by induction that the higher order derivatives of $\tilde{p}_n(z;\beta)$ can be described in terms of $S_n$ and its derivatives as follows
\begin{equation}	\label{diffpn}
\tilde{p}_n^{(k)}(z;\beta) = n^k \tilde{p}_n(x;\beta) \left[S_n^k(z) + \frac{1}{n} G_{n,k}(S_n,S_n',\ldots,S_n^{(k-1)})\right], \qquad k\geq 0,
\end{equation}
where $G_{n,k}$ is a polynomial in $k$ variables with coefficients of order $\mathcal{O}(1)$ in $n$. This polynomial is given recursively by
\begin{multline}    \label{eq:Gfunction}
	G_{n,k}(x_1,x_2,\ldots,x_k) = x_1 G_{n,k-1}(x_1,x_2,\ldots,x_{k-1}) + (k-1)x_1^{k-2}x_2 \\
	+ \frac{1}{n} \sum_{j=1}^{k-1} \frac{\partial}{\partial x_j} G_{n,k-1} (x_1,x_2,\ldots,x_{k-1})x_{j+1},
\end{multline}
with $G_{n,0}=0$. 
	
Inserting (\ref{diffpn}) in the differential equation of $\tilde{p}_n$ (\ref{eq:difftildep}) gives
\begin{multline*}
	0= z n^{r+1} \tilde{p}_n \left[S_n^{r+1} + \frac{1}{n} G_{n,r+1}(S_n,S_n',\ldots,S_n^{(r)})\right] \\+(\beta+r)n^r \tilde{p}_n 
    \left[S_n^{r} + \frac{1}{n} G_{n,r}(S_n,S_n',\ldots,S_n^{(r-1)})\right]\\
	+  \alpha_r^r  \sum_{k=0}^r c_{k,n} z^k n^{k+1} \tilde{p}_n \left[S_n^k + \frac{1}{n} G_{n,k}(S_n,S_n',\ldots,S_n^{(k-1)})\right],
\end{multline*}
Next, divide by $ n^{r+1}\tilde{p}_n$ and let $n=n_k \rightarrow \infty$. Since $S_{n_k}$ converges uniformly to $S$ on compact subsets of $\mathbb{C}\setminus [0,1]$, it follows that all the derivatives $S_{n_k}^{(j)}$ converge uniformly to $S^{(j)}$ on compact subsets of $\mathbb{C}\setminus [0,1]$. Furthermore, from (\ref{eq:diffcoeff})
\[	\lim_{n\rightarrow\infty} \frac{c_{k,n}}{n^{r-k}} = (-1)^{r+k+1}r \binom{r}{k},\]
so that in the limit we find
\[	zS^{r+1} + r\alpha_r^r  \sum_{k=0}^r  (-1)^{r+k+1} \binom{r}{k} z^kS^k = 0.\]
By using the binomial theorem, one can find equation (\ref{eq:algS}). Note that the equation does not depend on the subsequence $(n_k)_k$ anymore, so every converging subsequence has a limit $S$ satisfying (\ref{eq:algS}). 
\end{proof}
The algebraic equation from the previous proposition is of order $r+1$ so it has $r+1$ solutions, but we are interested in the solution which is a Stieltjes transform of a probability measure on $[0,1]$. Then one can find the asymptotic zero distribution measure $\mu$ by using the Stieltjes-Perron inversion formula.
\begin{theorem}   \label{theorem:asympI}
For $\alpha_r = \left(\frac{r+1}{r}\right)^{\frac{r+1}{r}}$, the zeros of $\tilde{p}_n(x;\beta)$ lie in $[0,1]$ for $n$ large enough and the asymptotic zero distribution of $\tilde{p}_n(x;\beta)$ as $n\rightarrow \infty$ is given by a measure which is absolutely continuous on $[0,1]$ with a density $u_r$ given by $u_r(x) = x^{r-1} w_r(x^r)$, where $w_r$ is given by
\[	w_r(\hat{x}) = \frac{1}{\pi \hat{x}} \frac{\sin \theta \sin r\theta \sin(r+1)\theta}{|\sin(r+1)\theta e^{i\theta} -\sin r\theta |^2},\]
where we used the change of variables
\[	\hat{x} = x^r = \frac{1}{c_r}\frac{(\sin (r+1)\theta)^{r+1}}{ (\sin r\theta)^r \sin \theta}, \qquad 0 < \theta < \frac{\pi }{r+1},\]	
and $c_r =\frac{(r+1)^{r+1}}{r^r}$.
\end{theorem}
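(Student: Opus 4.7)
The plan is to extract the correct branch of the Stieltjes transform $S$ from the algebraic equation \eqref{eq:algS}, locate its unique branch point to fix $\alpha_r$, parametrize the cut trigonometrically, and apply the Stieltjes--Perron inversion formula. First I would set $Y=zS$ so that \eqref{eq:algS} becomes $Y^{r+1}=r\alpha_r^{\,r}z^r(Y-1)^r$, and invert this to
\[ z^r=\frac{1}{r\alpha_r^{\,r}}\,\frac{Y^{r+1}}{(Y-1)^r}. \]
The condition $zS\to1$ as $z\to\infty$ selects the branch with $Y(\infty)=1$; the branch point in the relevant sheet is obtained from $dz/dY=0$, giving $(r+1)(Y-1)=rY$, hence $Y=r+1$ and $z^r=\dfrac{(r+1)^{r+1}}{r^{r+1}\alpha_r^{\,r}}=\dfrac{c_r}{r\alpha_r^{\,r}}$. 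Choosing $\alpha_r^{\,r}=c_r/r$, i.e.\ $\alpha_r=((r+1)/r)^{(r+1)/r}$, places this endpoint at $z=1$, so the support of $\mu$ is contained in $[0,1]$.

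Next I would parametrize the solution on the cut. For $x\in(0,1)$, realness of $Y^{r+1}/(Y-1)^r$ forces $(r+1)\arg(Y)=r\arg(Y-1)\pmod{2\pi}$, which is naturally realized by
\[ Y=\rho_2\,e^{ir\theta},\qquad Y-1=\rho_1\,e^{i(r+1)\theta}. \]
Matching real and imaginary parts of $Y-(Y-1)=1$ yields
\[ \rho_2=\frac{\sin(r+1)\theta}{\sin\theta},\qquad \rho_1=\frac{\sin r\theta}{\sin\theta}, \]
and substitution reproduces $x^r=\hat{x}=(1/c_r)(\sin(r+1)\theta)^{r+1}/[(\sin r\theta)^r\sin\theta]$. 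As $\theta$ runs over $(0,\pi/(r+1))$, the map is monotone: $\theta\to 0^+$ gives $Y\to r+1$ and $x\to 1$, while $\theta\to\pi/(r+1)^-$ gives $Y\to 0$ and $x\to 0$, so the support is exactly $[0,1]$.

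The density is then obtained via Stieltjes--Perron,
\[ u_r(x)=\frac{1}{\pi}\lim_{\varepsilon\to 0^+}\operatorname{Im}S(x-i\varepsilon)=\frac{\operatorname{Im}Y_-(x)}{\pi x}, \]
and the parametrization gives $\operatorname{Im}Y=\rho_2\sin r\theta=\sin(r+1)\theta\sin r\theta/\sin\theta$. To reconcile this with the stated form I would use the identity $\sin(r+1)\theta\,e^{i\theta}-\sin r\theta=\sin\theta\,e^{i(r+1)\theta}$ (following from $\sin(r+1)\theta\cos\theta=\tfrac12[\sin(r+2)\theta+\sin r\theta]$ together with $\sin(r+2)\theta-\sin r\theta=2\cos(r+1)\theta\sin\theta$), so that $|\sin(r+1)\theta\,e^{i\theta}-\sin r\theta|^2=\sin^2\theta$, and factoring out $x^{r-1}$ produces $u_r(x)=x^{r-1}w_r(x^r)$ with $w_r$ as stated.

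The main obstacle is the branch selection: among the $r+1$ roots of \eqref{eq:algS}, one must verify that the chosen $Y$ is the unique branch giving an $S=Y/z$ holomorphic on $\mathbb{C}\setminus[0,1]$ with $zS\to1$ at infinity, and that the resulting measure has total mass $1$. The parametrization shows $\operatorname{Im}Y_-(x)>0$ on $(0,1)$, and $\int_0^1 u_r(x)\,dx=1$ follows by the change of variables $x\mapsto\theta$. By the Grommer--Hamburger theorem already invoked, uniqueness of this Stieltjes transform forces every weakly convergent subsequence of $(\mu_n)$ to the same $\mu$, and hence $\mu_n\to\mu$ weakly.
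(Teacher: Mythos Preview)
Your proof is correct and follows the same overall strategy as the paper (trigonometric parametrization of the algebraic curve, then Stieltjes--Perron inversion), but the realization differs in a few useful ways. The paper first applies the M\"obius substitution $W=zS/(zS-1)$, reducing \eqref{eq:algS} to $W^{r+1}-r\alpha_r^{\,r} z^rW+r\alpha_r^{\,r} z^r=0$, writes $W=\rho e^{i\theta}$, separates real and imaginary parts, and only then reads off $\alpha_r$ a posteriori from the maximum of $\hat x(\theta)$ at $\theta=0$ via de l'H\^opital. You work directly with $Y=zS$, locate the branch point algebraically from $dz/dY=0$ (fixing $\alpha_r$ \emph{before} parametrizing), and assign arguments $r\theta$ and $(r+1)\theta$ to $Y$ and $Y-1$; this is the inverse of the paper's move, since $W=Y/(Y-1)=(\rho_2/\rho_1)e^{-i\theta}$ with $\rho_2/\rho_1$ equal to the paper's $\rho$. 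Your identity $\sin(r+1)\theta\,e^{i\theta}-\sin r\theta=\sin\theta\,e^{i(r+1)\theta}$ is a clean shortcut showing that the paper's denominator $|\sin(r+1)\theta\,e^{i\theta}-\sin r\theta|^2$ collapses to $\sin^2\theta$, which also makes it transparent that the paper's formula $u_r(x)=\dfrac{\rho}{\pi x}\dfrac{\sin\theta}{|\rho e^{i\theta}-1|^2}$ and your $u_r(x)=\dfrac{\sin(r+1)\theta\,\sin r\theta}{\pi x\,\sin\theta}$ coincide. Both arguments are equally rigorous; yours is somewhat more direct, while the paper's follows the template of \cite{LeursVA,NeuschelVA}.
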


\begin{proof}
The proof is along the same lines as in \cite{LeursVA} where the asymptotic zero distribution was obtained for Jacobi-Angelesco polynomials and as in \cite{NeuschelVA} for Jacobi-Pi\~neiro polynomials. First we transform the algebraic equation (\ref{eq:algS}) by taking
\[	W = \frac{zS}{zS-1}, \qquad zS = \frac{W}{W-1},\]
which gives
\begin{equation}	\label{eq:algW}
	W^{r+1} - r\alpha_r^r z^r W + r\alpha_r^r z^r=0.
\end{equation} 
We look for a solution $W$ of the form $\rho e^{i \theta}$, where $\theta$ is real and $\rho>0$. Take $z=x \in [0,1]$ and insert $W = \rho e^{i\theta}$ into (\ref{eq:algW}) to find
\[	\rho^{r+1} e^{i(r+1)\theta} - r\alpha_r^r x^r\rho e^{i\theta} + r\alpha_r^r x^r=0.\]
Hence, the real and imaginary parts satisfy
\begin{eqnarray}
	\label{eq:real}
	\rho^{r+1} \cos(r+1)\theta - r\alpha_r^rx^r\rho \cos\theta + r\alpha_r^r x^r&=&0, \\
	\label{eq:im}
	\rho^{r+1} \sin(r+1)\theta - r\alpha_r^r x^r\rho \sin\theta &=&0.
\end{eqnarray}
From (\ref{eq:im}) we have
\[	\hat{x} := x^r = \frac{\rho^r \sin(r+1)\theta}{r\alpha_r^r \sin\theta},\]
and using this in (\ref{eq:real}), we find
\[	\rho(\hat{x}) = \frac{\sin(r+1)\theta}{\sin r\theta}. \]	
Note that when $\theta\in (0,\pi/(r+1))$ one has $\rho >0$. Combining this gives
\[	\hat{x} = x^r = \frac{1}{r\alpha_r^r}\frac{(\sin (r+1)\theta)^{r+1}}{ (\sin r\theta)^r \sin \theta}.\]
This function gives the zeros in terms of the parameter $\theta$. By differentiation, one can see that it has its maximum in $\theta =0$. By applying de l`H\^{o}pital's rule $r+1$ times, one obtains $\hat{x}(0) = \frac{(r+1)^{r+1}}{r^{r+1}\alpha_r^r}$. Therefore, if one takes
\[   \alpha_r =\left(\frac{r+1}{r}\right)^{\frac{r+1}{r}}, \]
the zeros of $\tilde{p}_{n}$ will lie in $[0,1]$ for $n$ large enough. So for $\hat{x} \in [0,1]$, equation (\ref{eq:algW}) has a solution of the 
form $\rho e^{i\theta}$. Observe that also $\rho e^{-i\theta}$ is a solution and in fact
\[	W_+(\hat{x}) = \lim_{\epsilon \rightarrow 0+} W(\hat{x}+i\epsilon) = \rho(\hat{x}) e^{i\theta} ,
  \qquad 	W_-(\hat{x}) = \lim_{\epsilon \rightarrow 0+} W(\hat{x}-i\epsilon) = \rho(\hat{x}) e^{-i\theta},\]
are the boundary values of the function $W$ which is analytic on $\mathbb{C} \setminus [0,1]$. From the Stieltjes-Perron inversion formula, we can compute the density $u_r$ as
\[	u_r(x) = \frac{1}{2\pi i} (S_-(x) - S_+(x)) = \frac{\rho}{\pi x} \frac{\sin\theta}{|\rho e^{i\theta}-1|^2}.\]
Writing everything in terms of $\hat{x}$ gives the required result. 
\end{proof}

\begin{figure}[t]
\centering 
\includegraphics[scale=0.5]{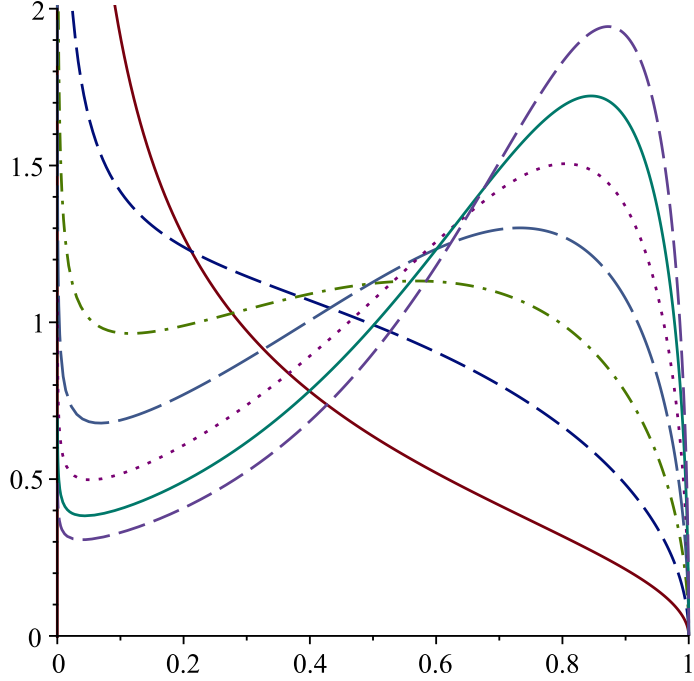}
\caption{The density $u_r$ of the asymptotic zero distribution on $[0,1]$ for $r=1$ (solid in red), $r=2$ (dash in blue), $r=3$ (dash-dot), 
$r=4$ (long dash), $r=5$ (dots), $r=6$ (solid in green) and $r=7$ (dash in purple).}
\label{figure:AZD}
\end{figure}

In Fig. \ref{figure:AZD} we have plotted the densities $u_r$ on $[0,1]$ for $r=1,2,3,4,5,6,7$. The case $r=1$ corresponds to the well-known Marchenko-Pastur distribution for the Laguerre polynomials with density
\[
u_1(x) = \frac{2}{\pi} x^{-1/2}(1-x)^{1/2}, \qquad 0 < x < 1.
\]
For $r\geq 1$, the density $u_r$ behaves near the endpoints as follows
\begin{eqnarray*}
u_r(x) &\sim&	x^{-\frac1{r+1}}, \qquad x \rightarrow 0,\\
u_r(x) &\sim& (1-x)^{1/2}, \qquad x \rightarrow 1.
\end{eqnarray*}
So for a fixed $r$ the density $u_r$ only has a singularity in $x=0$ of order $\frac{1}{r+1}$. This shows the typical behavior of zeros for an Angelesco system were the zeros on all the other intervals $[0,\omega^{j-1}]$, $j=2,3, \ldots,r$ push the zeros on $[0,1]$ to the right.

\subsection{Type II Laguerre-Angelesco polynomials}	\label{secIIsubII}
The type II Laguerre-Angelesco polynomial $L_{\vec{n}}(x;\beta)$ for the multi-index\\ $(n_1,n_2,\ldots,n_r)$ on the $r$-star and parameter $\beta>-1$ is the monic polynomial of degree $|\vec{n}|$ defined by the orthogonality conditions
\begin{equation}
\label{eq:orthotypeIILA}
\int_0^{\omega^{j-1}\infty} x^k L_{\vec{n}}(x;\beta) |x|^{\beta} e^{-x^r}\, dx =0, \qquad 0 \leq k \leq n_j-1,
\end{equation}
for all  $1 \leq j \leq r$. These polynomials were already studied by Sorokin \cite{Sorokin4,Sorokin} for simultaneous Pad\'{e} approximants, but he only considered the polynomials with a multi-index $\vec{n}=(n,n,\ldots,n)$. For the polynomials on and above the diagonal, we state the Rodrigues formula and compute the explicit expressions. With these expressions we compute the recurrence coefficients of the nearest neighbor recurrence relations. Then we state a differential equation of order $r+1$ for the diagonal polynomials which will be used to determine the asymptotic behavior of the zeros of these polynomials.

\subsubsection{Explicit expression}
The explicit expression for the type II Laguerre-Angelesco polynomials on the diagonal can be derived from its Rodrigues formula, which was stated in \cite{Sorokin4}.
\begin{theorem}
The type II Laguerre-Angelesco polynomials on the diagonal, $\vec{n}=(n,n,\ldots,n)$, is given by the Rodrigues formula
\begin{equation}	\label{eq:RodriguesLA}
	x^{\beta} e^{-x^r} L_{\vec{n}}(x;\beta) = \frac{(-1)^n}{r^n} \frac{d^n}{dx^n} [x^{\beta+n} e^{-x^r}].
\end{equation}
\end{theorem}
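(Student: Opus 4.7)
The plan is to verify that the right-hand side of the Rodrigues formula, once stripped of the weight, defines a monic polynomial of degree $rn = |\vec{n}|$ that satisfies the $r$ orthogonality conditions in \eqref{eq:orthotypeIILA} for $\vec n = (n,\ldots,n)$. By normality of the diagonal multi-index (which holds in this Angelesco system), such a polynomial is unique, so it must coincide with $L_{\vec{n}}(x;\beta)$.

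First I would define
\[
 Q_{\vec n}(x) := \frac{(-1)^n}{r^n}\, x^{-\beta}e^{x^r}\frac{d^n}{dx^n}\bigl[x^{\beta+n}e^{-x^r}\bigr],
\]
and check that $Q_{\vec n}$ is a monic polynomial of degree $rn$. This I would do by applying the Leibniz rule: each derivative of $e^{-x^r}$ produces a polynomial factor of degree $r-1$ times $e^{-x^r}$, so by induction $\frac{d^{n-k}}{dx^{n-k}}e^{-x^r} = \bigl((-r)^{n-k}x^{(n-k)(r-1)}+\text{lower}\bigr)e^{-x^r}$. Multiplying by the $k$-th derivative of $x^{\beta+n}$ and summing, the $k=0$ term contributes the leading monomial $\beta$-power $x^{\beta+nr}e^{-x^r}$ with coefficient $(-r)^n$; all other terms give smaller $x$-exponents. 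After dividing by $x^{\beta}e^{-x^r}$ and multiplying by $(-1)^n/r^n$, the leading term is $x^{rn}$, so $Q_{\vec n}$ is monic of degree $|\vec{n}|$.

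Next I would verify the orthogonality conditions ray-by-ray. Parameterising $x = \omega^{j-1}t$ with $t\in(0,\infty)$ on the ray $\Delta_j$, one has $(\omega^{j-1}t)^r = t^r$ (so $e^{-x^r}=e^{-t^r}$ decays at infinity), while $x^{-\beta}|x|^\beta = \omega^{-(j-1)\beta}$ is a constant on the ray. Hence for $0\le k\le n-1$,
\[
 \int_{\Delta_j} x^k Q_{\vec n}(x)\,|x|^\beta e^{-x^r}\,dx = \frac{(-1)^n}{r^n}\omega^{-(j-1)\beta}\int_{\Delta_j} x^k\frac{d^n}{dx^n}\bigl[x^{\beta+n}e^{-x^r}\bigr]\,dx.
\]
I would then integrate by parts $n$ times along the ray. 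The boundary term at infinity vanishes because $e^{-x^r}$ decays super-exponentially on $\Delta_j$, and at $0$ the surviving factor behaves like $x^{\beta+n-m}$ with $m\le n-1$, hence as $x^{\beta+1}$ or better, which vanishes since $\beta>-1$. After $n$ integrations by parts the integrand carries $\frac{d^n}{dx^n}(x^k)$, which is identically zero for $k\le n-1$, so each integral vanishes. This gives the orthogonality on every ray $\Delta_j$ for $0\le k\le n-1$.

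The main obstacle is simply keeping bookkeeping clean when integrating by parts along complex contours and checking that the endpoint contributions at $0$ truly vanish; beyond that the argument is a direct generalisation of the classical $r=1$ Rodrigues argument for Laguerre polynomials. Once monicity, the correct degree, and the $r\cdot n$ orthogonality conditions are in place, the uniqueness of the type II multiple orthogonal polynomial for a normal (diagonal) multi-index forces $Q_{\vec n} = L_{\vec n}(\,\cdot\,;\beta)$, completing the proof.
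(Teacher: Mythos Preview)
Your proposal is correct and follows essentially the same route as the paper: define the Rodrigues expression, check it yields a monic polynomial of degree $rn$, and verify the orthogonality conditions on each ray by integration by parts, concluding via uniqueness of the type~II polynomial for the (normal) diagonal index. The only cosmetic difference is that you integrate by parts $n$ times until $\frac{d^n}{dx^n}(x^k)=0$, whereas the paper integrates by parts $k$ times until the remaining integrand is a total derivative; both bookkeeping schemes give the same vanishing, and your explicit tracking of the constant $|x|^\beta x^{-\beta}$ on the ray and of the $x^{\beta+1}$ behavior at the origin makes the endpoint analysis slightly more transparent than the paper's.
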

\begin{proof}
One can check easily that the degree of $L_{\vec{n}}(x;\beta)$ in (\ref{eq:RodriguesLA}) is $rn$ and that this polynomial is monic. For the orthogonality take $1 \leq j \leq r$ and $0 \leq k \leq n-1$, then we have
\[     \int_0^{\omega^{j-1}\infty} x^k L_{\vec{n}}(x;\beta) |x|^{\beta} e^{-x^r}\, dx 
   = \frac{(-1)^n}{r^n}\int_0^{\omega^{j-1}\infty} x^k \frac{d^n}{dx^n} x^{\beta+n} e^{-x^r}\, dx.\]
Using integration by parts $k$ times gives
\[	\frac{(-1)^{n-k}}{r^n} k!\left( \frac{d^{n-k}}{dx^{n-k}} x^{\beta+n} e^{-x^r} \right) \bigg\rvert_{0}^{\omega^{j-1}\infty}=0.\]
\end{proof}
From the Rodrigues formula one can compute the explicit expression for the type II Laguerre-Angelesco polynomials on the diagonal 
$\vec{n}=(n,n,\ldots,n)$
\begin{equation}	\label{eq:typeII}
L_{\vec{n}}(x;\beta) = \frac{(-1)^n}{r^n}\sum_{\ell=0}^n \frac{x^{r\ell}}{\ell!} \sum_{k=0}^{\ell} \binom{\ell}{k} (rk+\beta+1)_n (-1)^k.
\end{equation}
Hence $L_{\vec{n}}$ is a polynomial of degree $n$ in $x^r$ and it has an $\omega$-symmetry, i.e., for every $k$ we have
\[	L_{\vec{n}}(\omega^k x;\beta) = L_{\vec{n}}(x;\beta),	\]
which implies that the zeros on the rays of the $r$-star are rotated copies of the zeros restricted to the interval $[0,\infty)$.

For the polynomials above the diagonal, we have the following Rodrigues formula.
\begin{theorem}
The type II Laguerre-Angelesco polynomials above the diagonal $\vec{n}+\vec{e}_k$, with $\vec{n}=(n,n,\ldots,n)$ and $\vec{e}_k$ the 
$k^{\mbox{\footnotesize th}}$ unit vector, is given by the Rodrigues formula
\begin{equation}	\label{eq:Rodgrigues_upper}
	x^{\beta} e^{-x^r} L_{\vec{n}+\vec{e}_k}(x;\beta)= \frac{(-1)^n}{r^n} \frac{d^n}{dx^n} [x^{\beta+n} e^{-x^r} L_{\vec{e}_k}(x;\beta+n)],
\end{equation}
where $L_{\vec{e}_k}(x;\beta+n)$ is an ordinary monic orthogonal polynomial of degree $1$ on $[0,\omega^{k-1}\infty)$ with respect to 
$|x|^{\beta+n}e^{-x^r}$. This polynomial is given by
\begin{equation}	\label{eq:upper2}
	L_{\vec{e}_k}(x;\beta+n) = x - \frac{\Gamma(\frac{\beta+n+2}{r})}{\Gamma(\frac{\beta+n+1}{r})}\omega^{k-1}.
\end{equation}
\end{theorem}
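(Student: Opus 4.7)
My plan is to verify formula (\ref{eq:upper2}) first and then check that the Rodrigues formula (\ref{eq:Rodgrigues_upper}) produces a polynomial of the correct degree with the correct orthogonality properties; by uniqueness of the type II polynomial (guaranteed by the Angelesco structure) this will identify it with $L_{\vec{n}+\vec{e}_k}$.

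For the formula for $L_{\vec{e}_k}(x;\beta+n)$, I would simply write it as $x - c$ and compute $c$ from the single orthogonality condition $\int_0^{\omega^{k-1}\infty}(x-c)|x|^{\beta+n}e^{-x^r}\,dx=0$. Parametrising $x=\omega^{k-1}t$ with $t\in(0,\infty)$ and using $\omega^r=1$, the two moments reduce to standard integrals $\int_0^{\infty}t^{\beta+n+s}e^{-t^r}\,dt = \frac{1}{r}\Gamma\!\left(\frac{\beta+n+s+1}{r}\right)$ for $s=0,1$. The ratio yields $c=\omega^{k-1}\Gamma(\tfrac{\beta+n+2}{r})/\Gamma(\tfrac{\beta+n+1}{r})$, which is exactly (\ref{eq:upper2}).

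For the Rodrigues formula, let $F(x):=x^{\beta+n}e^{-x^r}L_{\vec{e}_k}(x;\beta+n)$ and define $L(x)$ by $x^{\beta}e^{-x^r}L(x)=\frac{(-1)^n}{r^n}F^{(n)}(x)$. To see that $L$ is a polynomial of degree $rn+1$, I would apply Leibniz's rule and extract powers of $x$: each derivative acting on $e^{-x^r}$ brings down $-rx^{r-1}$ to leading order, so after $n$ derivatives the leading contribution in $F^{(n)}(x)$ is $(-r)^n x^{n(r-1)}\cdot x^{\beta+n+1}e^{-x^r}=(-r)^n x^{rn+\beta+1}e^{-x^r}$, yielding a monic polynomial of degree $rn+1=|\vec{n}+\vec{e}_k|$ after dividing by $(-1)^n r^{-n}x^{\beta}e^{-x^r}$. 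For orthogonality on the ray $\Delta_j$, I would substitute the Rodrigues expression into $\int_{\Delta_j}x^{\ell}L(x)|x|^{\beta}e^{-x^r}\,dx$ (absorbing the branch factor between $x^{\beta}$ and $|x|^{\beta}$ as a constant) and integrate by parts $\ell$ times. The boundary terms vanish at $0$ thanks to $x^{\beta+n}$ and at $\omega^{j-1}\infty$ thanks to $e^{-x^r}=e^{-t^r}$ along the ray (because $\omega^r=1$). For $0\le\ell\le n-1$ a further integration by parts reduces the integrand to a total derivative and gives zero. For $j=k$ and $\ell=n$, after $n$ integrations by parts one is left with exactly $\int_{\Delta_k}F(x)\,dx$ up to constants, which vanishes precisely because $L_{\vec{e}_k}(x;\beta+n)$ was constructed as the degree-$1$ monic polynomial orthogonal to $1$ against the weight $|x|^{\beta+n}e^{-x^r}$ on $\Delta_k$.

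The main obstacle will be bookkeeping of the complex branches and unit-root factors: distinguishing $x^{\beta}$ from $|x|^{\beta}$ on each ray introduces factors $\omega^{(j-1)\beta}$ that must cancel consistently, and one has to verify that the integration-by-parts argument remains valid along the oriented contour $[0,\omega^{j-1}\infty)$. Once the parametrisation $x=\omega^{j-1}t$ is used and the identity $\omega^{rs}=1$ is exploited, the decay at infinity and the vanishing at the origin are straightforward, but writing the argument cleanly without sign errors is the delicate point.
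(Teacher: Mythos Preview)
Your proposal is correct and follows essentially the same route as the paper: verify \eqref{eq:upper2} by a moment computation, then check that the right-hand side of \eqref{eq:Rodgrigues_upper} defines a monic polynomial of degree $rn+1$ and establish the orthogonality conditions by integrating by parts $\ell$ times, using the vanishing of the boundary terms and, for $j=k$, $\ell=n$, the defining orthogonality of $L_{\vec{e}_k}(x;\beta+n)$. You actually supply more detail than the paper (the explicit moment ratio for $c$, the leading-term Leibniz argument, and the branch-factor bookkeeping), all of which is sound.
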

\begin{proof}
One can easily check that (\ref{eq:upper2}) is orthogonal to the constant function $1$ with respect to $|x|^{\beta+n}e^{-x^r}$ on 
$[0,\omega^{k-1}\infty)$. Also the degree condition of $L_{\vec{n}+\vec{e}_k}$ is trivially satisfied, and it is monic. 
For the orthogonality we want to check that 
\[	\int_0^{\omega^{j-1}\infty} x^{\ell} |x|^{\beta} e^{-x^r} L_{\vec{n}+\vec{e}_k}(x;\beta)\, dx = 0,\]
for all $0\leq \ell \leq n-1$ if $j \neq k$ and for all $0\leq \ell \leq n$ if $j = k$, with $1\leq k\leq r$. Inserting the right hand side of
(\ref{eq:Rodgrigues_upper}) in the above integral gives
\[	\frac{(-1)^n}{r^n}  \int_0^{\omega^{j-1}\infty} x^{\ell} \frac{d^n}{dx^n} [x^{\beta+n} e^{-x^r} L_{\vec{e}_k}(x;\beta+n)]\, dx. \]
Integrate this  $\ell$ times by parts 
\[	\frac{(-1)^{n+\ell}}{r^n} \ell!   \int_0^{\omega^{j-1}\infty}  
    \frac{d^{n-\ell}}{dx^{n-\ell}} [x^{\beta+n} e^{-x^r} L_{\vec{e}_k}(x;\beta+n)]\, dx,\]
which is zero for all $0\leq \ell \leq n-1$ and all choices of $1\leq j,k\leq r$. In case $j=k$ and $\ell=n$, this integral equals
\[	\frac{n!}{r^n}   \int_0^{\omega^{k-1}\infty} x^{\beta+n} e^{-x^r} L_{\vec{e}_k}(x;\beta+n)\, dx,\]
which is zero due to the orthogonality of $L_{\vec{e}_k}(x;\beta+n)$. 
\end{proof}
A similar formula was obtained in \cite[Chapter 23]{Ismail} for the Jacobi-Angelesco polynomials on $[a,0]\cup[0,1]$ where $a<0$. 

The explicit expression for the type II Laguerre-Angelesco polynomials above the diagonal can be derived from the Rodrigues formula
\begin{multline}	\label{eq:upper}
L_{\vec{n}+\vec{e}_k}(x;\beta)\\
= \frac{(-1)^n}{r^n}  \sum_{m=0}^n \frac{x^{rm}}{m!} \sum_{\ell=0}^m \binom{m}{\ell} (-1)^{\ell} 
\left[ (r\ell+\beta+2)_n x -  \frac{\Gamma(\frac{\beta+n+2}{r})}{\Gamma(\frac{\beta+n+1}{r})}\omega^{k-1}(r\ell+\beta+1)_n  \right].
\end{multline}
Comparing this with (\ref{eq:typeII}) gives
\begin{equation}	\label{eq:upper1}
L_{\vec{n}+\vec{e}_k}(x;\beta)  
= xL_{\vec{n}}(x;\beta+1)- \frac{\Gamma(\frac{\beta+n+2}{r})}{\Gamma(\frac{\beta+n+1}{r})}\omega^{k-1} L_{\vec{n}}(x;\beta).
\end{equation}
Similar as in case $r=1$ (\ref{JA-LA1}) and $r=2$ (\ref{eq:JA-LA2}), the type II Laguerre-Angelesco polynomials $L_{\vec{n}}(x;\beta)$ on the $r$-star (for any multi-index $\vec{n}=(n_1,\ldots,n_r)$) are limiting cases of the type II Jacobi-Angelesco polynomials $P_{\vec{n}}^{(\alpha,\beta)}(x)$ as follows
\[	L_{\vec{n}}(x;\beta) = \lim_{\alpha\rightarrow \infty} \alpha^{\frac{|\vec{n}|}{r}} P_{\vec{n}}^{(\alpha,\beta)}(\alpha^{-1/r}x).\]
Note that this polynomial has degree $rn$ in $x$ and is monic.

\subsubsection{Recurrence relation}   \label{section:rectypeII}
For any $r$, the nearest neighbor recurrence relation \cite{VA} for the type II Laguerre-Angelesco polynomials are given by
\begin{equation}	\label{eq:recrelII}
xL_{\vec{n}}(x;\beta) = L_{\vec{n}+\vec{e}_k}(x;\beta) + b_{\vec{n},k} L_{\vec{n}}(x;\beta) + \sum_{\ell=1}^r a_{\vec{n},\ell}L_{\vec{n}-\vec{e}_{\ell}}(x;\beta),
\end{equation}
for every $1\leq k\leq r$. Note that the recurrence coefficients $a_{\vec{n},\ell}$ are the same coefficients as the ones for the type I Laguerre-Angelesco polynomials. So for $\vec{n}=(n,n,\ldots,n)$ a diagonal multi-index for $r\geq 1$, we have from Proposition \ref{prop:reccoef}
\[	a_{\vec{n},\ell} = \frac{n}{r^2} \frac{\Gamma(\frac{n+\beta+1}{r})}{\Gamma(\frac{n+\beta-1}{r}+1)}\omega^{2(\ell-1)}, \qquad 1\leq \ell \leq r.\]
An explicit expression for the recurrence coefficients $b_{\vec{n},k}$ is given by
\begin{proposition}
Let $\vec{n}=(n,n,\ldots,n)$ be a diagonal multi-index for $r>1$, then
\begin{equation}	\label{eq:reccoeffII}
	b_{\vec{n},k} = \frac{\Gamma(\frac{\beta+n+2}{r})}{\Gamma(\frac{\beta+n+1}{r})}\omega^{k-1}.
\end{equation}
\end{proposition}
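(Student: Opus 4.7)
The plan is to compute $b_{\vec{n},k}$ by comparing the coefficient of $x^{rn}$ on both sides of the nearest neighbor recurrence relation \eqref{eq:recrelII}, using the explicit relation \eqref{eq:upper1} to handle $L_{\vec{n}+\vec{e}_k}$.

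The key observation is the $\omega$-symmetry $L_{\vec{n}}(\omega x;\beta) = L_{\vec{n}}(x;\beta)$ for a diagonal multi-index, which follows from \eqref{eq:typeII}: the polynomial $L_{\vec{n}}(x;\beta)$ is a polynomial in $x^r$ of degree $n$. Consequently, since $r>1$, the coefficient of $x^{rn-1}$ in $L_{\vec{n}}(x;\beta)$ is zero, so the coefficient of $x^{rn}$ in the left-hand side $xL_{\vec{n}}(x;\beta)$ of \eqref{eq:recrelII} is zero. The same reasoning applied to $L_{\vec{n}}(x;\beta+1)$ shows that $xL_{\vec{n}}(x;\beta+1)$ has no $x^{rn}$ term either.

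Using \eqref{eq:upper1}, the coefficient of $x^{rn}$ in $L_{\vec{n}+\vec{e}_k}(x;\beta)$ therefore comes solely from the second summand $-\frac{\Gamma((\beta+n+2)/r)}{\Gamma((\beta+n+1)/r)}\omega^{k-1}L_{\vec{n}}(x;\beta)$ and, since $L_{\vec{n}}(x;\beta)$ is monic, equals $-\frac{\Gamma((\beta+n+2)/r)}{\Gamma((\beta+n+1)/r)}\omega^{k-1}$. On the other hand, the polynomials $L_{\vec{n}-\vec{e}_\ell}(x;\beta)$ have degree $rn-1$, so they contribute nothing to the coefficient of $x^{rn}$ on the right-hand side. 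Hence, equating $x^{rn}$-coefficients in \eqref{eq:recrelII} gives
\[
0 = -\frac{\Gamma(\tfrac{\beta+n+2}{r})}{\Gamma(\tfrac{\beta+n+1}{r})}\omega^{k-1} + b_{\vec{n},k}\cdot 1,
\]
which is the stated formula \eqref{eq:reccoeffII}.

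There is essentially no obstacle here; the only thing that must be checked with care is that the $\omega$-symmetry argument actually requires $r>1$ (which is why the statement excludes $r=1$), and that the relation \eqref{eq:upper1} is applied with the correct shift of the parameter $\beta$ inside the first summand. All other ingredients (monicity of $L_{\vec{n}}$, degree of $L_{\vec{n}-\vec{e}_\ell}$) are immediate from the definitions.
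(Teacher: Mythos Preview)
Your proof is correct and follows essentially the same approach as the paper: compare the coefficient of $x^{rn}$ in the recurrence relation \eqref{eq:recrelII}, use that $L_{\vec{n}}(x;\beta)$ is a polynomial in $x^r$ (so the left-hand side contributes zero), and read off the $x^{rn}$-coefficient of $L_{\vec{n}+\vec{e}_k}$ from \eqref{eq:upper1}. Your argument is in fact a bit more explicit than the paper's, since you spell out why the $L_{\vec{n}-\vec{e}_\ell}$ terms do not contribute and why the restriction $r>1$ is needed.
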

\begin{proof}
Since $L_{\vec{n}}(x;\beta)$ is monic, one can compute $b_{\vec{n},k}$ by comparing the coefficients with $x^{rn}$ in (\ref{eq:recrelII}). On the left hand side this coefficient is equal to zero, since $L_{\vec{n}}(x;\beta)$ is a linear combination of $x^{rk}$, with $0 \leq k \leq n$ (\ref{eq:typeII}). Therefore, 
\[	b_{\vec{n},k} = -(\mbox{coefficient with } x^{rn} \mbox{ in } L_{\vec{n}+\vec{e}_k}(x;\beta)).\] 
From (\ref{eq:upper}) (or more easily from (\ref{eq:upper1})), we obtain (\ref{eq:reccoeffII}).  
\end{proof}

\subsubsection{Differential equation}
In this section we give a linear differential equation of order $r+1$ for the type II Laguerre-Angelesco polynomial $L_{\vec{n}}(x;\beta)$. The differential equation is a combination of lowering and raising operators for these polynomials. In the following we use the multi-index $\vec{1}=(1,1,\ldots,1)$ to describe diagonal polynomials.
\begin{lemma}
For $n\geq r+1$, the polynomial $L_{n\vec{1}}(x;\beta)$ given by (\ref{eq:RodriguesLA}) has for $\beta>0$ the raising operator
\begin{equation}	\label{eq:raisingLA}
	(x^{\beta}e^{-x^r}L_{n\vec{1}}(x;\beta))' = -r x^{\beta-1}e^{-x^r}L_{(n+1)\vec{1}}(x;\beta-1),
\end{equation}
and for $\beta>-1$ the lowering operator
\begin{equation}	\label{eq:loweringLA}
	(L_{n\vec{1}}(x;\beta))' = \sum_{k=1}^r b_{k,n} x^{r-1} L_{(n-k)\vec{1}}(x;\beta+k),
\end{equation}
where
\[	b_{k,n} = (-1)^{k+1}\binom{n}{k} \frac{(r-k+1)_{k-1}}{r^{k-2}}.\]
\end{lemma}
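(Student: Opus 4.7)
The plan is to establish the raising operator directly from the Rodrigues formula and to prove the lowering operator via an orthogonality argument in the variable $\hat x=x^r$.

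For the raising operator (\ref{eq:raisingLA}), I would simply differentiate both sides of the Rodrigues formula (\ref{eq:RodriguesLA}) to obtain
\[
(x^{\beta}e^{-x^r}L_{n\vec 1}(x;\beta))' = \frac{(-1)^n}{r^n}D^{n+1}[x^{\beta+n}e^{-x^r}],
\]
and recognize the right-hand side as $-rx^{\beta-1}e^{-x^r}L_{(n+1)\vec 1}(x;\beta-1)$ by reapplying (\ref{eq:RodriguesLA}) with shifted parameters $(n,\beta)\mapsto(n+1,\beta-1)$. This step is mechanical.

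The lowering operator (\ref{eq:loweringLA}) is more delicate. By the $\omega$-symmetry recorded just after (\ref{eq:typeII}), $L_{n\vec 1}(x;\beta)$ is a polynomial in $\hat x=x^r$, so write $L_{n\vec 1}(x;\beta)=\tilde L_n(x^r;\beta)$; then $L_{n\vec 1}'(x;\beta)=rx^{r-1}\tilde L_n'(x^r;\beta)$ and $x^{r-1}L_{(n-k)\vec 1}(x;\beta+k) = x^{r-1}\tilde L_{n-k}(x^r;\beta+k)$. Cancelling $x^{r-1}$ reduces the claim to the polynomial identity in $\hat x$
\[
r\tilde L_n'(\hat x;\beta) = \sum_{k=1}^{r} b_{k,n}\,\tilde L_{n-k}(\hat x;\beta+k).
\]
The change of variables $\hat x=x^r$ on each ray in (\ref{eq:orthotypeIILA}), together with $\omega^r=1$, transforms the star orthogonality of $L_{n\vec 1}$ into
\[
\int_0^\infty \tilde L_n(\hat x;\beta)\,\hat x^{k/r}\,w(\hat x)\,d\hat x = 0,\qquad 0\leq k\leq n-1,
\]
with respect to the Laguerre-type weight $w(\hat x)=\hat x^{(\beta+1)/r-1}e^{-\hat x}$. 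Integrating by parts against the test function $\hat x^{j/r}$ (boundary terms vanish for $j\geq r$ since $\beta>-1$) produces
\[
\int_0^\infty \tilde L_n'\,\hat x^{j/r}w\,d\hat x = \int_0^\infty \tilde L_n\,\hat x^{j/r}w\,d\hat x - \frac{j+\beta+1-r}{r}\int_0^\infty \tilde L_n\,\hat x^{(j-r)/r}w\,d\hat x,
\]
and both right-hand integrals vanish by orthogonality of $\tilde L_n$ precisely when $j\in\{r,r+1,\dots,n-1\}$. This supplies $n-r$ orthogonality conditions on the degree-$(n-1)$ polynomial $\tilde L_n'$.

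A parallel change-of-variables argument applied to $L_{(n-k)\vec 1}$ at parameter $\beta+k$ shows that $\tilde L_{n-k}(\hat x;\beta+k)$ is $w$-orthogonal to $\hat x^{k/r},\hat x^{(k+1)/r},\dots,\hat x^{(n-1)/r}$, a set containing the $n-r$ conditions above. Since the $r$ polynomials $\tilde L_{n-k}(\hat x;\beta+k)$, $k=1,\dots,r$, have distinct degrees $n-1,n-2,\dots,n-r$, they form a basis of the $r$-dimensional subspace of polynomials of degree $\leq n-1$ satisfying those $n-r$ orthogonality conditions. Consequently $\tilde L_n'(\hat x;\beta)$ admits a unique expansion of the advertised form, which establishes the existence part of the lowering operator.

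Finally, I would pin down the coefficients $b_{k,n}$ by matching the coefficient of $\hat x^{n-k}$ on both sides in succession for $k=1,2,\dots,r$. Monicity of every $\tilde L_{n-k}$ forces $b_{1,n}=rn$ from the leading match, and each subsequent match yields a single linear equation for $b_{k,n}$ whose remaining inputs are the already-determined $b_{j,n}$ ($j<k$) and the sub-leading coefficients of $\tilde L_{n-j}(\hat x;\beta+j)$ extracted from (\ref{eq:typeII}). The main obstacle is this last combinatorial step: the sub-leading coefficients of $\tilde L_{n-j}$ are alternating binomial sums of shifted Pochhammer symbols $(rm+\beta+j+1)_{n-j}$, and collapsing the resulting triangular system to the closed form $b_{k,n}=(-1)^{k+1}\binom{n}{k}(r-k+1)_{k-1}/r^{k-2}$ requires exactly the family of finite-difference identities underlying Lemma~\ref{lemma}.
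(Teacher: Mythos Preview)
Your proposal is correct and follows essentially the same strategy as the paper: the raising operator by differentiating the Rodrigues formula, and the lowering operator via an integration-by-parts orthogonality argument, a dimension count identifying the $r$-dimensional span of the $L_{(n-k)\vec 1}(\cdot;\beta+k)$, and a triangular coefficient match. Your substitution $\hat x=x^r$ is a cosmetic reparametrization of the paper's argument in the $x$-variable (where the relevant space is spanned by $x^{rk-1}$, $1\le k\le n$); both versions leave the final closed-form evaluation of $b_{k,n}$ to a coefficient comparison using the explicit expression~(\ref{eq:typeII}).
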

\begin{proof}
The raising operator (\ref{eq:raisingLA}) can easily be proven by differentiating the Rodrigues formula (\ref{eq:RodriguesLA}). To prove 
(\ref{eq:loweringLA}) we first observe that by the explicit expression of $L_{n\vec{1}}(x;\beta)$ (\ref{eq:typeII}), 
the polynomial $(L_{n\vec{1}}(x;\beta))'$ is of degree $rn-1$, has leading coefficient $rn$ and can be written as a linear combination of the monomials 
$x^{rk-1}$ where $1 \leq k\leq n$. Therefore, $(L_{n\vec{1}}(x;\beta))'$ is determined by $n-1$ unknowns. Integration by parts shows that for $\beta>0$
\[	\int_0^{\infty} x^k (L_{n\vec{1}}(x;\beta))' x^{\beta+1} e^{-x^r}\, dx 
= - \int_0^{\infty} x^{\beta+k} e^{-x^r} (\beta+k+1-rx^r)L_{n\vec{1}}(x;\beta)\, dx,\]
and by (\ref{eq:orthotypeIILA}) this is zero for $0 \leq k \leq n-r-1$. So $(L_{n\vec{1}}(x;\beta))'$ is a polynomial of $n-1$ unknown coefficients which is orthogonal to all $x^k$ for $0\leq k\leq n-r-1$ with weight $x^{\beta+1} e^{-x^r}$ on the interval $[0,\infty)$. These are $n-r$ orthogonality conditions. The $n$ functions $x^{r-1} L_{(n-j)\vec{1}}(x;\beta+j)$, $1\leq j \leq n$, have the same orthogonality conditions, are of degree at most 
$rn-1$ and are linearly independent. Hence, they span the linear space of polynomials that are a linear combination of $x^{rk-1}$, $1\leq k \leq r$, with the $n-r+1$ orthogonality conditions. Therefore,
\[	(L_{n\vec{1}}(x;\beta))' = \sum_{k=1}^r b_{k,n} x^{r-1} L_{(n-k)\vec{1}}(x;\beta+k).\]
To find the coefficients $b_{k,n}$, $1\leq k\leq r$, one compares the coefficients of $x^{r(n-m)}$, $1\leq m\leq r$, in the latter expansion and the explicit expression of $(L_{n\vec{1}}(x;\beta))'$ that can be derived from (\ref{eq:typeII}).  
\end{proof}
With these operators one can find the differential equation.
\begin{theorem} \label{theorem:diffLAII}
For any $n\in \mathbb{N}$, $r\geq 1$ and $\beta >r-2$ the polynomial $y=L_{n\vec{1}}(x;\beta)$ satisfies the differential equation
\begin{equation}	\label{eq:diffLAII}
	[x^{\beta+1}e^{-x^r}y']^{(r)} + \sum_{k=0}^{r-1} c_{k,n}x^k[x^{\beta}e^{-x^r}y]^{(k)}=0,
\end{equation}
where the coefficients $c_{k,n}$ for $0\leq k \leq r-1$ are given by
\begin{equation}	\label{eq:coeffdiffLAII}
	c_{k,n} = r\binom{r}{k} [(n+k+1)_{r-k}-(k+1)_{r-k}].
\end{equation}
\end{theorem}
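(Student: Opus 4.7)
The plan is to derive the differential equation by combining the raising operator \eqref{eq:raisingLA} and the lowering operator \eqref{eq:loweringLA}, and then identify the coefficients $c_{k,n}$ by matching terms. Iterating the raising operator $k$ times gives the clean formula
\[
[x^{\beta}e^{-x^r}L_{n\vec{1}}(x;\beta)]^{(k)} = (-r)^k x^{\beta-k}e^{-x^r}L_{(n+k)\vec{1}}(x;\beta-k),
\]
so that $x^k[x^{\beta}e^{-x^r}y]^{(k)} = (-r)^k x^{\beta}e^{-x^r}L_{(n+k)\vec{1}}(x;\beta-k)$. This already shows the second sum in \eqref{eq:diffLAII} is a linear combination of terms of the form $x^\beta e^{-x^r} L_{(n+\ell)\vec{1}}(x;\beta-\ell)$ for $0 \leq \ell \leq r-1$.

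Next I would apply the lowering operator to $y'$ to get $y' = \sum_{j=1}^r b_{j,n} x^{r-1} L_{(n-j)\vec{1}}(x;\beta+j)$, so that
\[
x^{\beta+1}e^{-x^r}y' = \sum_{j=1}^r b_{j,n}\, x^{r-j} \cdot \bigl[x^{\beta+j}e^{-x^r} L_{(n-j)\vec{1}}(x;\beta+j)\bigr].
\]
Applying the $r$-th derivative via Leibniz and the iterated raising operator as above, the polynomial factor $x^{r-j}$ terminates the Leibniz sum at $m = r-j+j = r$ and contributes nonzero only when $m \geq j$. After reindexing with $\ell = m-j$, one obtains an expansion of $[x^{\beta+1}e^{-x^r}y']^{(r)}$ as $x^{\beta}e^{-x^r}$ times a linear combination of the same $L_{(n+\ell)\vec{1}}(x;\beta-\ell)$ for $\ell = 0,\ldots, r-1$. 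Equating coefficients of each $L_{(n+\ell)\vec{1}}(x;\beta-\ell)$ gives a closed expression for $c_{\ell,n}$.

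The expected main obstacle is the resulting combinatorial identity. Substituting the explicit $b_{j,n} = (-1)^{j+1}\binom{n}{j}(r-j+1)_{j-1}/r^{j-2}$ and simplifying using $(r-j+1)_{j-1}(r-j)! = (r-1)!$ collapses the coefficient to
\[
c_{\ell,n} = \frac{r \cdot r!}{\ell!(r-\ell)!}(r-\ell)!\sum_{j=1}^{r-\ell}\binom{n}{j}\binom{r}{\ell+j},
\]
(up to constants I would track carefully). Then the key step is Vandermonde's identity in the form $\sum_{j=0}^{r-\ell}\binom{n}{j}\binom{r}{\ell+j} = \binom{n+r}{r-\ell}$, which after subtracting the $j=0$ term yields $\binom{n+r}{r-\ell} - \binom{r}{\ell}$. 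Multiplying through by $(r-\ell)!$ converts these binomials into the Pochhammer symbols $(n+\ell+1)_{r-\ell}$ and $(\ell+1)_{r-\ell}$, producing exactly \eqref{eq:coeffdiffLAII}.

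Throughout, the hypothesis $\beta > r-2$ will be needed to justify the use of the raising operator \eqref{eq:raisingLA} at parameter $\beta - (r-1)$ (i.e.\ so that $\beta - r + 1 > -1$ and all intermediate polynomials are defined and orthogonal). The rest of the argument reduces to careful bookkeeping of the Leibniz expansion and the Vandermonde simplification, both of which are routine once the structural ingredients are in place.
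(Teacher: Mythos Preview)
Your proposal is correct and follows essentially the same route as the paper: apply the lowering operator to $y'$, write $x^{\beta+1}e^{-x^r}y'=\sum_j b_{j,n}\,x^{r-j}\bigl[x^{\beta+j}e^{-x^r}L_{(n-j)\vec 1}(x;\beta+j)\bigr]$, differentiate $r$ times via Leibniz, and use the iterated raising operator to bring everything back to the functions $x^{k}[x^{\beta}e^{-x^r}y]^{(k)}$ (equivalently, to $x^{\beta}e^{-x^r}L_{(n+k)\vec 1}(x;\beta-k)$); the paper does precisely this and arrives at $c_{k,n}=r^{2}\sum_{j=1}^{r-k}\binom{n}{j}\binom{r}{k+j}(r-j+1)_{j-1}(k+1)_{r-k-j}$. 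Your Vandermonde step actually fills in the one detail the paper leaves out: using $(r-j+1)_{j-1}(k+1)_{r-k-j}=(r-1)!/k!$ collapses that sum to $\frac{r\cdot r!}{k!}\sum_{j\ge 1}\binom{n}{j}\binom{r}{k+j}$, and Vandermonde plus the $j=0$ subtraction gives exactly $r\binom{r}{k}[(n+k+1)_{r-k}-(k+1)_{r-k}]$, whereas the paper simply asserts ``can be computed to be equal to'' \eqref{eq:coeffdiffLAII}.
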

\begin{proof}
From the lowering operator (\ref{eq:loweringLA}) one has
\[	x^{\beta+1}e^{-x^r} L_{n\vec{1}}'(x;\beta) = \sum_{j=1}^r b_{j,n} x^{r+\beta}e^{-x^r} L_{(n-j)\vec{1}}(x;\beta+j).\]
Differentiating both sides $r$ times gives
\begin{multline*}
	[x^{\beta+1}e^{-x^r}L_{n\vec{1}}'(x;\beta)]^{(r)} = \\
	\sum_{j=1}^r b_{j,n} \sum_{k=0}^r \binom{r}{k} (x^{r-j})^{(r-k)}[x^{\beta+j} e^{-x^r}L_{(n-j)\vec{1}}(x;\beta+j)]^{(k)}.
\end{multline*}
Now we can use the raising operator 
\begin{multline*}
	[x^{\beta+1}e^{-x^r}L_{n\vec{1}}'(x;\beta)]^{(r)} =\\
	\sum_{j=1}^r b_{j,n} \sum_{k=j}^r \binom{r}{k} (k-j+1)_{r-k} x^{k-j} (-1)^j r^j [x^{\beta} e^{-x^r}L_{n\vec{1}}(x;\beta)]^{(k-j)}.
\end{multline*}
Using the expression of $b_{j,n}$ from the above lemma and changing the order of the two sums gives for $y =L_{n\vec{1}}(x;\beta)$
\[	[x^{\beta+1}e^{-x^r}y']^{(r)} +  \sum_{k=0}^{r-1} c_{k,n} x^k [x^{\beta}e^{-x^r}y]^{(k)}=0,\]
where
\[	c_{k,n} =  r^2\sum_{j=1}^{r-k} \binom{n}{j}\binom{r}{k+j} (r-j+1)_{j-1}(k+1)_{r-k-j}.\]
This is exactly (\ref{eq:diffLAII}) where the coefficients $c_{k,n}$ can be computed to be equal to (\ref{eq:coeffdiffLAII}).
\end{proof}

\subsubsection{Asymptotic zero behavior}
Now we investigate the asymptotic behavior of the zeros of the diagonal type II Laguerre-Angelesco polynomials $L_{\vec{n}}(x;\beta)$, i.e., for the multi-index $\vec{n}=(n,n,\ldots,n)$ and $n\rightarrow\infty$. From the symmetry in the explicit expression of this polynomial (\ref{eq:typeII}), we see that the zeros are of the form
\[	\omega^j x_{1,n}, \omega^j x_{2,n} , \dots, \omega^j x_{n,n}, \qquad 0\leq j \leq r-1,	\]
where $x_{1,n}, x_{2,n} , \ldots, x_{n,n}$ are the real zeros. Similarly as Lemma \ref{lemma1}, one can show that the real zeros of $L_{\vec{n}}(x;\beta)$ are simple and lie in the open interval $(0,\infty)$. 
Also, the largest zero will tend to $\infty$ as $n\rightarrow \infty$. Therefore, to be able to say something about the asymptotic behavior of the zeros, we will rescale the zeros so that they are on the compact interval $[0,1]$. This rescaling will be the same as for the type I polynomials and depends on the behavior of the largest zero.

\begin{lemma}
For $n$ large enough, the real zeros of the polynomials $L_{\vec{n}}$ lie in the interval
\[	V_{n,r}:= \left[0,(r+1)^{1/r}\left(\frac{r+1}{r}\right)^{\frac{r+1}{r}}n^{1/r}\right].	\]
\end{lemma}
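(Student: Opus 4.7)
The plan is to adapt the argument of Lemma~\ref{lemma:ifri} (the type I analogue) to the present setting, with one essential simplification in the choice of auxiliary polynomial. First I would use the $\omega$-symmetry $L_{\vec{n}}(\omega x;\beta)=L_{\vec{n}}(x;\beta)$ to reduce the orthogonality conditions (\ref{eq:orthotypeIILA}) on the $r$-star to a single relation on the positive real line: substituting $x=\omega^{j-1}t$ in (\ref{eq:orthotypeIILA}) and using $\omega^r=1$ yields
\[
\int_0^\infty x^k L_{\vec{n}}(x;\beta)\,x^{\beta}e^{-x^r}\,dx=0,\qquad 0\le k\le n-1.
\]
Hence $L_{\vec{n}}$ is orthogonal to every polynomial of degree at most $n-1$ with respect to $x^{\beta}e^{-x^r}$ on $[0,\infty)$; this is a much richer family than in the type I case, where only the sparse powers $x^{rj-1}$ are available.

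Arguing by contradiction, suppose the real zeros of $L_{\vec{n}}$ lying in $V_{n,r}$ are $0<x_{1,n}<\cdots<x_{m,n}$ with $m<n$, and introduce the degree-$m$ auxiliary polynomial
\[
q_m(x)=\prod_{i=1}^m (x-x_{i,n}),
\]
which is \emph{not} $\omega$-symmetric, in contrast to the type I proof. Using the factorization $L_{\vec{n}}(x;\beta)=\prod_{i=1}^n(x^r-x_{i,n}^r)$ together with
\[
x^r-x_{i,n}^r=(x-x_{i,n})\bigl(x^{r-1}+x_{i,n}x^{r-2}+\cdots+x_{i,n}^{r-1}\bigr),
\]
in which the second factor is strictly positive on $(0,\infty)$, I would show that on $V_{n,r}$ the product $L_{\vec{n}}(x;\beta)q_m(x)$ has constant sign $(-1)^{n-m}$: the factors with $i\le m$ contribute squares and the factors with $i>m$ are strictly negative, since the corresponding $x_{i,n}$ exceed the right endpoint of $V_{n,r}$.

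By the orthogonality above, $\int_0^\infty L_{\vec{n}}(x;\beta)q_m(x)\,x^{\beta}e^{-x^r}\,dx=0$. Setting $N=n(r+1)$ and rescaling $x=N^{1/r}y$, the interval $V_{n,r}$ is mapped onto $V_r^*=[0,((r+1)/r)^{(r+1)/r}]$, already identified in the proof of Lemma~\ref{lemma:ifri} as a neighborhood of the support of the equilibrium measure for $w(y)=e^{-y^r}$. The rescaled polynomial $P_N(y)=L_{\vec{n}}(N^{1/r}y;\beta)\,q_m(N^{1/r}y)$ has degree $rn+m\le rn+(n-1)<N$, so the infinite-finite range inequality applies and gives, for $n$ large enough,
\[
\int_{\mathbb{R}^+\setminus V_r^*}|P_N(y)|y^{\beta}e^{-Ny^r}\,dy<\int_{V_r^*}|P_N(y)|y^{\beta}e^{-Ny^r}\,dy.
\]
Combined with the constant-sign property of $P_N$ on $V_r^*$, this contradicts $\int_0^\infty P_N(y)y^{\beta}e^{-Ny^r}\,dy=0$, exactly as in the closing step of the proof of Lemma~\ref{lemma:ifri}, forcing $m=n$.

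The main obstacle is the choice of auxiliary polynomial. The type I proof is forced to use the $\omega$-symmetric polynomial $\prod(x^r-x_{i,n}^r)$ of degree $rm$ because $p_n$ is only orthogonal to the sparse family $\{x^{rj-1}\}$; transplanting that choice here would give $\deg P_N=rn+rm$, which violates the bound $\deg P_N<N=n(r+1)$ that the infinite-finite range inequality requires. The key insight is that the stronger polynomial orthogonality of $L_{\vec{n}}$ on $[0,\infty)$ permits the shorter polynomial $q_m=\prod(x-x_{i,n})$ of degree $m\le n-1$, and this shorter polynomial is precisely what makes the degree bookkeeping work.
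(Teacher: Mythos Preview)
Your argument is correct and follows the same overall strategy the paper points to: the paper's own proof consists of just two sentences, referring back to Lemma~\ref{lemma:ifri} and noting that only the $n$ real zeros matter. You have filled in precisely the detail that makes the transplanted argument go through, namely that the auxiliary polynomial must be taken as $q_m(x)=\prod_{i=1}^m(x-x_{i,n})$ of degree $m$ rather than the $\omega$-symmetric $\prod_{i=1}^m(x^r-x_{i,n}^r)$ of degree $rm$ used in the type~I proof. Your diagnosis of why this matters is exactly right: the type~II polynomial $L_{\vec{n}}$ is orthogonal on $[0,\infty)$ to \emph{all} polynomials of degree $\le n-1$ (not just to the sparse family $\{x^{rj-1}\}$), which is what licenses the shorter $q_m$, and the shorter $q_m$ is what keeps $\deg P_N=rn+m<n(r+1)=N$ so that the infinite--finite range inequality applies. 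The paper does not spell this out, but it is the natural reading of ``similar'' together with the remark about the $n$ real zeros; your proof is a faithful and careful completion of what the paper only sketches.
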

\begin{proof}
The proof is similar as for the type I polynomials, see Lemma \ref{lemma:ifri}. Note that in this case, one only needs to consider the $n$ real zeros of $L_{\vec{n}}$.
\end{proof}

Since the previous lemma does not give the smallest possible interval that contains the zeros, we need to determine the smallest possible $\alpha_r$ such that the real zeros of $L_{\vec{n}}$ lie in $[0,\alpha_rn^{1/r}]$. To obtain this optimal $\alpha_r$, we use the rescaling
\[x= \alpha_r n^{\frac{1}{r}} z,\]
such that the asymptotic distribution of the real zeros of $L_{\vec{n}}(\alpha_r n^{1/r} z)$ is supported on the compact interval $[0,1]$, which gives the required $\alpha_r$.

From here on, we take a large enough $N$ such that for all $n\geq N$ all (except possible $o(n)$) of the real zeros of the rescaled $L_{\vec{n}}$ lie in $[0,1]$. Hence, define the polynomial
\[ 	\tilde{L}_{\vec{n}}(x;\beta) := L_{\vec{n}}\left( \alpha_r n^{\frac{1}{r}} x;\beta \right). \]
Then we will analyze the behavior of the real zeros of these new polynomials. Note that from the differential equation of $L_{\vec{n}}(x;\beta)$ in Theorem \ref{theorem:diffLAII}, one can derive the following differential equation for $\tilde{L}_{\vec{n}}$
\begin{equation}
\label{eq:difftildeL}
\left[x^{\beta+1} e^{-\alpha_r^r n x^r}\tilde{L}'_{\vec{n}}(x;\beta)   \right]^{(r)} + \sum_{k=0}^{r-1} C_{k,n} x^k \left[z^{\beta}e^{-\alpha_r^r n x^r }\tilde{L}_{\vec{n}}(x;\beta) \right]^{(k)}=0,  
\end{equation}
where 
\[	C_{k,n} = r\binom{r}{k} \alpha_r^r n [(n+k+1)_{r-k}-(k+1)_{r-k}].	\]

Denote the real zeros of $\tilde{L}_{\vec{n}}(x;\beta)$ by $0< x_{1,n} <  ... < x_{n,n} $ and consider the normalized zero counting measure given by
\[	\mu_n = \frac{1}{rn} \sum_{k=1}^n \sum_{j=1}^r \delta_{\omega^{j-1} x_{k,n}}.\]
Then $(\mu_n)_n$ is a sequence of probability measures on $\Delta_r^*:=\bigcup\limits_{j=1}^r [0,\omega^{j-1}]$, and by Helley's selection principle it will contain a subsequence $(\mu_{n_k})_k$ that converges weakly to a probability measure $\mu$ on $\Delta_r^*$. If this limit is independent of the subsequence, then we call it the asymptotic zero distribution of the zeros of $\tilde{L}_{\vec{n}}(x;\beta)$. We will investigate this by means of the Stieltjes transform
\[	S_n(z) = \int_{\Delta_r^*} \frac{d\mu_n(x)}{z-x} = \frac{1}{rn}\frac{\tilde{L}_{\vec{n}}'(z;\beta)}{\tilde{L}_{\vec{n}}(z;\beta)}, 
      \qquad S(z) = \int_{\Delta_r^*} \frac{d\mu(x)}{z-x}, \qquad z \in \mathbb{C}\setminus \Delta_r^*,  \]
and use the Grommer-Hamburger theorem \cite{Geronimo} which says that $\mu_n$ converges weakly to $\mu$ if and only if $S_n$ converges uniformly on compact subsets of $\mathbb{C}\setminus \Delta_r^*$ to $S$ and $zS \rightarrow 1$ as $z \rightarrow \infty$.

First we show that the weak limit of $(\mu_{n_k})_k$ has a Stieltjes transform $S$ which satisfies an algebraic equation of order $r+1$. 
\begin{proposition}
Suppose that $\mu_{n_k}$ converges weakly to $\mu$, then the Stieltjes transform $S$ of $\mu$ satisfies
\begin{equation}	\label{eq:algSII}
	zr^r(S -\alpha_r^r  z^{r-1})^{r+1} +  \alpha_r^r (1+zrS-\alpha_r^r r z^r)^r=0.
\end{equation}
\end{proposition}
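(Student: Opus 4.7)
\smallskip

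\noindent\emph{Proof proposal.} The plan is to mimic the proof of the corresponding result for the type I polynomials (equation~(\ref{eq:algS})), using the differential equation~(\ref{eq:difftildeL}) in place of~(\ref{eq:difftildep}). The main device is to introduce the weighted polynomial
\[
    W_n(z) := z^{\beta}e^{-\alpha_r^r n z^r}\tilde{L}_{\vec{n}}(z;\beta),
\]
so that the differential equation becomes
\[
    \bigl[z\cdot rnS_n(z)W_n(z)\bigr]^{(r)} + \sum_{k=0}^{r-1}C_{k,n}z^k W_n^{(k)}(z)=0,
\]
after using $\tilde{L}'_{\vec{n}} = rnS_n\tilde{L}_{\vec{n}}$ and $z^{\beta+1}e^{-\alpha_r^r n z^r}\tilde{L}'_{\vec{n}}= z\cdot rnS_nW_n$.

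The key step is a leading-order expansion of the successive derivatives of $W_n$. A direct computation gives
\[
   W_n'(z) = rn\bigl(S_n(z)-\alpha_r^r z^{r-1}\bigr)W_n(z) + \tfrac{\beta}{z}W_n(z),
\]
and by induction (identical in structure to the identity (\ref{diffpn}) used in the type~I proof) one obtains
\[
    W_n^{(k)}(z) = (rn)^k\bigl(S_n(z)-\alpha_r^r z^{r-1}\bigr)^k W_n(z)+\mathcal{O}(n^{k-1})W_n(z),
\]
uniformly on compact subsets of $\mathbb{C}\setminus\Delta_r^*$, where the error encodes $\beta/z$ and derivatives of $S_n$. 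The same expansion yields $[z\cdot rnS_n W_n]^{(r)} = (rn)^{r+1}zS_n(S_n-\alpha_r^rz^{r-1})^rW_n + \mathcal{O}(n^r)W_n$. Using (\ref{eq:coeffdiffLAII}), the leading asymptotics $C_{k,n}=r\binom{r}{k}\alpha_r^r n^{r-k+1}+\mathcal{O}(n^{r-k})$ combines with the above to give, after dividing the equation by $n^{r+1}W_n$ and passing to the limit along the convergent subsequence $n_k\to\infty$,
\[
    r^{r+1}zS(S-\alpha_r^r z^{r-1})^r+r\alpha_r^r\sum_{k=0}^{r-1}\binom{r}{k}r^k z^k\bigl(S-\alpha_r^r z^{r-1}\bigr)^k=0.
\]

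To finish, one recognises the sum on the right as a truncated binomial expansion: adding and subtracting the missing $k=r$ term gives
\[
   \sum_{k=0}^{r-1}\binom{r}{k}r^kz^k(S-\alpha_r^rz^{r-1})^k = \bigl(1+rz(S-\alpha_r^rz^{r-1})\bigr)^r - r^rz^r(S-\alpha_r^rz^{r-1})^r,
\]
and since $1+rz(S-\alpha_r^r z^{r-1}) = 1+rzS-\alpha_r^r r z^r$ and $r^{r+1}zS(S-\alpha_r^r z^{r-1})^r - r\alpha_r^r\cdot r^r z^r(S-\alpha_r^r z^{r-1})^r = r^{r+1}z(S-\alpha_r^r z^{r-1})^{r+1}$, this collapses precisely to equation~(\ref{eq:algSII}) after dividing by $r$. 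As in the type I proof, since the limiting equation is independent of the chosen subsequence, Helley's selection principle together with the Grommer--Hamburger theorem guarantees that $S$ satisfies~(\ref{eq:algSII}). The main technical obstacle is controlling the $\mathcal{O}(n^{k-1})$ error terms and justifying uniform convergence of all derivatives $S_{n_k}^{(j)}$ on compact subsets of $\mathbb{C}\setminus\Delta_r^*$, which is standard once $S_{n_k}\to S$ uniformly on compacts by analyticity.
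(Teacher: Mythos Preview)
Your argument is correct and follows the same overall strategy as the paper: insert the leading-order asymptotics of successive derivatives into the differential equation~(\ref{eq:difftildeL}), divide by $n^{r+1}$, and pass to the limit along the convergent subsequence to obtain the algebraic relation, then simplify via the binomial theorem.

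The organizational difference is worth noting. The paper keeps $\tilde{L}_{\vec{n}}$ and the weight $z^{\beta}e^{-\alpha_r^r n z^r}$ separate: it applies the Leibniz rule to each bracketed term in~(\ref{eq:difftildeL}), uses the expansion $\tilde{L}_{\vec{n}}^{(k)} = (rn)^k\tilde{L}_{\vec{n}}[S_n^k + \mathcal{O}(1/n)]$, and separately analyzes the derivatives $[z^{\beta+1}e^{-\alpha_r^r n z^r}]^{(r-m)}$ by identifying their dominant term $(-\alpha_r^r r n z^{r-1})^{r-m}$. This produces a double sum that is then collapsed via the binomial theorem. Your device of passing to the weighted function $W_n = z^{\beta}e^{-\alpha_r^r n z^r}\tilde{L}_{\vec{n}}$ absorbs the weight into the logarithmic derivative from the outset, so the single quantity $S_n - \alpha_r^r z^{r-1}$ appears directly and only one inductive expansion is needed. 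This is a cleaner bookkeeping choice and leads to the same limiting equation with less intermediate manipulation; the paper's route, on the other hand, makes the separate contributions of the polynomial and the weight more visible. Both are valid and the final algebraic simplification to~(\ref{eq:algSII}) is identical.
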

\begin{proof}
Since the Stieltjes transform $S_n$ of $\mu_n$ satisfies
\[	\tilde{L}_{\vec{n}}'(z;\beta) = rn \tilde{L}_{\vec{n}}(z;\beta) S_n(z),\]
one can prove by induction that the higher order derivatives of $\tilde{L}_{\vec{n}}(z;\beta)$ can be described in terms of $S_n$ and its derivatives as follows
\begin{equation}	\label{diffLn}
	\tilde{L}_{\vec{n}}^{(k)}(z;\beta) = r^k n^k \tilde{L}_{\vec{n}}(z;\beta) \left[S_n^k(z) 
    + \frac{1}{n} G_{n,k}[S_n,S_n',\ldots,S_n^{(k-1)}](z)\right], \qquad k\geq 0,
\end{equation}
where $G_{n,k}$ is the same map as for the type I polynomials, given by (\ref{eq:Gfunction}).
Inserting (\ref{diffLn}) in the differential equation of $\tilde{L}_{\vec{n}}$ (\ref{eq:difftildeL}) gives
\begin{eqnarray}  \label{eq:sum}
	0 &=&	 \left[ z^{\beta+1} e^{-\alpha_r^r n z^r} y'  \right]^{(r)} + \sum_{k=0}^{r-1}C_{k,n}z^k 
       \left[ z^{\beta} e^{-\alpha_r^r n z^r}y  \right]^{(k)}    \nonumber \\
	&=&  \sum_{m=0}^{r} \binom{r}{m} y^{(m+1)}[ z^{\beta+1} e^{-\alpha_r^r n z^r}]^{(r-m)} + \sum_{k=0}^{r-1}C_{k,n}z^k  
   \sum_{\ell=0}^k \binom{k}{\ell} y^{(\ell)} [ z^{\beta} e^{-\alpha_r^r n z^r}]^{(k-\ell)}   \nonumber \\
	&=& \sum_{m=0}^{r} \binom{r}{m}r^{m+1}n^{m+1} y [S_n^{m+1}+\mathcal{O}(1/n) ] [z^{\beta+1} e^{-\alpha_r^r n z^r}]^{(r-m)} \nonumber \\
	&& + \sum_{k=0}^{r-1}C_{k,n}z^k  \sum_{\ell=0}^k \binom{k}{\ell} r^{\ell}n^{\ell} y [S_n^{\ell}+\mathcal{O}(1/n) ] 
         [ z^{\beta} e^{-\alpha_r^r n z^r}]^{(k-\ell)} .
\end{eqnarray}
Now we have two terms containing a derivative. To be able to take the limit for $n\rightarrow \infty$, we need to know the behavior of these derivatives in terms of $n$. The derivative in the first sum of (\ref{eq:sum}) equals
\[	[z^{\beta+1} e^{-\alpha_r^r n z^r}]^{(r-m)} = \sum_{i=0}^{r-m} \binom{r-m}{i} (\beta-i+2)_i z^{\beta+1-i} 
         e^{-\alpha_r^r n z^r} \Pi_{r-m-i}(z), 		\]
where $\Pi_{r-m-i}$ is a polynomial in $z$ with leading term $(-\alpha_r^rnrz^{r-1})^{r-m-i}$. This term is also the one with the highest power of $n$ in $\Pi_{r-m-i}$. Therefore, the first sum in (\ref{eq:sum}) behaves as $n^{r+1}$ for $n$ large. This means that for $n \to \infty$ only the term for $i=0$ is relevant. Similarly one can show that the second sum in (\ref{eq:sum}) also behaves like $n^{r+1}$ for large $n$, for this one uses the expression of $C_{k,n}$ to obtain
\[	\lim_{n\rightarrow\infty} C_{k,n} n^{k-r-1} = 	r\binom{r}{k}\alpha_r^r.\]
	
Next, multiply by $\frac{e^{\alpha_r^r n z^r}}{ n^{r+1} z^{\beta} y }$ and let $n=n_k \rightarrow \infty$. Since $S_{n_k}$ converges uniformly to $S$ on compact subsets of $\mathbb{C}\setminus \Delta_r^*$, it follows that all the derivatives $S_{n_k}^{(j)}$ converge uniformly to $S^{(j)}$ on compact subsets of $\mathbb{C}\setminus \Delta_r^*$. In the limit we find
\[	0 = rzS\sum_{m=0}^{r} \binom{r}{m}r^{m} S^{m} (-\alpha_r^r r z^{r-1})^{r-m} 
	+ r\alpha_r^r\sum_{k=0}^{r-1}\binom{r}{k} z^k  \sum_{\ell=0}^k \binom{k}{\ell} r^{\ell} S^{\ell} (-\alpha_r^r r z^{r-1})^{k-\ell} . \]
By using the binomial theorem a few times, one can find equation (\ref{eq:algSII}). Since the equation does not depend on the subsequence $(n_k)_k$ anymore, every converging subsequence has a limit $S$ satisfying (\ref{eq:algSII}). 
\end{proof}

The algebraic equation from the previous proposition is of order $r+1$ so it has $r+1$ solutions, but we are interested in the solution which is a Stieltjes transform of a probability measure on $\Delta_r^*$, which is the solution for which $S(z) = \mathcal{O}(1/z)$.
Then one can find the asymptotic zero distribution measure $\mu$ by using the Stieltjes-Perron inversion formula.
\begin{theorem}
For $\alpha_r = \left(\frac{r+1}{r}\right)^{\frac{r+1}{r}}$, the real zeros of $\tilde{L}_{\vec{n}}(x;\beta)$ lie in $[0,1]$ for $n$ large enough and the asymptotic zero distribution of $\tilde{L}_{\vec{n}}(x;\beta)$ as $n\rightarrow \infty$ is given by a measure which is absolutely continuous on 
$\bigcup\limits_{j=1}^r[0,\omega^{j-1}]$ with a density $u_r$ given by $u_r(x) = x^{r-1}w_r(x^r)$, where $w_r$ is given by
\[	w_r(\hat{x}) = \frac{1}{\pi r \hat{x}}\frac{\sin \theta \sin (r\theta) \sin (r+1)\theta}{|\sin(r+1)\theta e^{i\theta}-\sin r \theta |^2},\]
where we used the change of variables
\[	\hat{x} = x^r = \frac{1}{c_r}\frac{(\sin (r+1)\theta)^{r+1}}{ (\sin r\theta)^r \sin \theta}, \qquad 0 < \theta < \frac{\pi }{r+1},\]	
and $c_r =\frac{(r+1)^{r+1}}{r^r}$.
\end{theorem}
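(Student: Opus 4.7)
My plan is to follow the strategy of Theorem \ref{theorem:asympI}, reducing the algebraic equation (\ref{eq:algSII}) for the Stieltjes transform to the same degree-$(r+1)$ equation (\ref{eq:algW}) that appeared for the type I polynomials.

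Specifically, I first introduce $V = rz(S - \alpha_r^r z^{r-1})$, which turns (\ref{eq:algSII}) into $V^{r+1} + r\alpha_r^r z^r(1+V)^r = 0$. The M\"obius substitution $T = V/(1+V)$ (so that $V = T/(1-T)$ and $1+V = 1/(1-T)$) then transforms the equation into
\begin{equation*}
T^{r+1} - r\alpha_r^r z^r T + r\alpha_r^r z^r = 0,
\end{equation*}
which is exactly equation (\ref{eq:algW}). Inverting the substitution yields
\begin{equation*}
S(z) = \alpha_r^r z^{r-1} + \frac{T(z)}{rz\,(1-T(z))}.
\end{equation*}
The requirement $S(z)\sim 1/z$ at infinity selects the branch with $T\to 1$ as $z\to\infty$ (rather than $T\to\infty$ as in the type I case), and the analysis of $\hat{x}=x^r$ as a function of $\theta$ carried out in the proof of Theorem~\ref{theorem:asympI} fixes $\alpha_r = \left(\tfrac{r+1}{r}\right)^{(r+1)/r}$ as the unique rescaling that places the branch cut at $[0,1]$, so the real zeros of $\tilde{L}_{\vec n}$ lie in $[0,1]$ for $n$ large.

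On the support $[0,1]$, the boundary values of $T$ are the complex-conjugate pair $\rho(\hat{x}) e^{\pm i\theta}$, with $\rho$ and $\hat{x}$ parametrized by $\theta$ exactly as in Theorem \ref{theorem:asympI}; the assignment to $T_+$ and $T_-$ is opposite to the type I case, as dictated by $\mathrm{Im}\,S_+ < 0$. Since $\alpha_r^r z^{r-1}$ is real on $(0,1)$, only the fraction $T/(rz(1-T))$ contributes to the jump, and a direct computation gives
\begin{equation*}
S_-(x) - S_+(x) = \frac{T_- - T_+}{rx(1-T_+)(1-T_-)} = \frac{2i\rho\sin\theta}{rx\,|\rho e^{i\theta} - 1|^2}.
\end{equation*}
Dividing by $2\pi i$ in the Stieltjes-Perron inversion formula produces the claimed density (the factor $1/r$ distinguishing $w_r$ here from its type I counterpart comes precisely from the $1/(rz)$ in the representation of $S$); substituting $\rho = \sin(r+1)\theta/\sin r\theta$ and converting to $\hat{x}$ reproduces the stated formula for $w_r$. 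Finally, the $\omega$-symmetry $L_{\vec n}(\omega x;\beta) = L_{\vec n}(x;\beta)$ extends the density to the full $r$-star by rotational symmetry.

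The main obstacle will be the branch bookkeeping: one must verify that the algebraic equation for $T$ really defines a single-valued analytic function on $\mathbb{C}\setminus \bigcup_j[0,\omega^{j-1}]$, identify the sheet corresponding to $T\to 1$ at infinity, and pin down the sign of $\theta$ in the boundary values so that $u_r\ge 0$. The remaining ingredients---a priori confinement of the real zeros to a rescaled compact set via the infinite-finite range inequality (already obtained in the preceding lemma), tightness of $(\mu_n)$, and the Grommer-Hamburger theorem applied on $\mathbb{C}\setminus \bigcup_j[0,\omega^{j-1}]$---carry over verbatim from the type I argument.
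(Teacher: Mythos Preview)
Your approach is correct and in fact slightly neater than the paper's. The paper does not reduce (\ref{eq:algSII}) to the type~I equation; instead it introduces the variable
\[
  W=\frac{1+zrS-\alpha_r^r r z^r}{S-\alpha_r^r z^{r-1}},\qquad
  S=\frac{1-\alpha_r^r r z^r+\alpha_r^r z^{r-1}W}{W-rz},
\]
obtains the \emph{different} algebraic equation $\alpha_r^r W^{r+1}-r\alpha_r^r z W^r+r^r z=0$, and then repeats the $\rho e^{i\theta}$ analysis from scratch, arriving at $u_r(x)=\dfrac{\rho\sin\theta}{\pi r\,|\rho e^{i\theta}-rx|^2}$ before rewriting in terms of $\hat{x}$. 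Your two-step substitution $V=rz(S-\alpha_r^r z^{r-1})$, $T=V/(1+V)$ lands exactly on equation~(\ref{eq:algW}); the paper's variable and yours are related by $T=rz/W$, so the two routes are equivalent. The gain from your reduction is conceptual: it makes transparent from the outset that the type~II density on each ray equals the type~I density divided by $r$ (a fact the paper records only after the computation), and it lets you quote the parametrization $\rho=\sin(r+1)\theta/\sin r\theta$, $\hat{x}=\hat{x}(\theta)$ and the determination of $\alpha_r$ directly from Theorem~\ref{theorem:asympI} rather than rederiving them. Your branch bookkeeping ($T\to 1$ at infinity, $T_\pm=\rho e^{\mp i\theta}$ so that $u_r\ge 0$) is correct and is the one point where the reduction requires genuine care, since the relevant sheet differs from the type~I choice.
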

\begin{proof}
The proof is along the same lines as Theorem \ref{theorem:asympI} for the type I polynomials. First we transform the algebraic equation (\ref{eq:algSII}) by taking
\[	W = \frac{1+zrS-\alpha_r^r r z^r}{S -\alpha_r^r  z^{r-1}}, \qquad S = \frac{1-\alpha_r^r r z^r + \alpha_r^rz^{r-1} W}{W-rz},\]
which gives
\begin{equation}	\label{eq:algWII}
	\alpha_r^r W^{r+1} - r\alpha_r^r z W^r + r^r z=0.
\end{equation} 
We look for a solution $W$ of the form $\rho e^{i \theta}$, where $\theta$ is real and $\rho>0$. Take $z=x \in [0,1]$ and insert $W = \rho e^{i\theta}$ into (\ref{eq:algWII}) to find
\[	\alpha_r^r \rho^{r+1} e^{i(r+1)\theta} - r\alpha_r^r x\rho^r e^{ir\theta} + r^r x=0.\]
Hence, the real and imaginary parts satisfy
\begin{eqnarray}
	\label{eq:realII}
	\alpha_r^r \rho^{r+1} \cos(r+1)\theta - r\alpha_r^rx\rho^r \cos r\theta + r^r x&=&0, \\
	\label{eq:imII}
	\alpha_r^r \rho^{r+1} \sin(r+1)\theta - r\alpha_r^r x\rho^r \sin r\theta &=&0.
\end{eqnarray}
From (\ref{eq:imII}) we have
\[	x = \frac{\rho \sin(r+1)\theta}{r \sin r\theta},\]
and using this in (\ref{eq:realII}), we find
\[	\rho({x}) = \frac{r^{1-1/r}\sin^{1/r}(r+1)\theta}{\alpha_r\sin^{1/r} \theta}. \]	
Note that when $\theta\in (0,\pi/(r+1))$ one has $\rho >0$. Combining this gives
\[	x = \frac{1}{r^{1/r}\alpha_r}\frac{\sin^{1+1/r} (r+1)\theta}{ \sin r\theta \sin^{1/r} \theta},\]
or
\[	\hat{x} := x^r = \frac{1}{r\alpha_r^r}\frac{\sin^{r+1} (r+1)\theta}{ \sin^r r\theta \sin \theta},\]
which is the same function as for the type I polynomials (see Theorem \ref{theorem:asympI}). This function gives the zeros in terms of the new 
parameter $\theta$. By differentiation, one can see that it has its maximum in $\theta =0$. By applying de l`H\^{o}pital's rule $r+1$ times, 
one obtains $\hat{x}(0) = \frac{(r+1)^{r+1}}{r^{r+1}\alpha_r^r}$. Therefore, if one takes
\[   	\alpha_r =\left(\frac{r+1}{r}\right)^{\frac{r+1}{r}},  	\]
then for $n \to \infty$ the asymptotic distribution of the real zeros of $\tilde{L}_{\vec{n}}$ is on $[0,1]$, and hence the asymptotic distribution of
all the zeros is on the unit $r$-star $\bigcup\limits_{j=1}^r[0,\omega^{j-1}]$. So for these  values of $x$, equation (\ref{eq:algWII}) has a solution of the form $\rho e^{i\theta}$. Observe that also $\rho e^{-i\theta}$ is a solution and in fact
\[	W_+({x}) = \lim_{\epsilon \rightarrow 0+} W({x}+i\epsilon) = \rho({x}) e^{i\theta} ,
  \qquad 	W_-({x}) = \lim_{\epsilon \rightarrow 0+} W({x}-i\epsilon) = \rho({x}) e^{-i\theta}, \]
are the boundary values of the function $W$ which is analytic on $\mathbb{C} \setminus \bigcup\limits_{j=1}^r[0,\omega^{r-1}]$. From the Stieltjes-Perron inversion formula, we can compute the density $u_r$ as
\[	u_r(x) = \frac{1}{2\pi i} (S_-(x) - S_+(x)) = \frac{\rho}{\pi r} \frac{\sin\theta}{|\rho e^{i\theta}-x|^2}.\]
Writing everything in terms of $\hat{x}$ gives the required result. 
\end{proof}
Note that this probability distribution is defined on the unit $r$-star $\bigcup\limits_{j=1}^r[0,\omega^{j-1}] \subset \mathbb{C}$, so this would require a 3D-plot. For convenience we have plotted in Fig. \ref{figure:AZDII} only the density functions $u_r$ on $[0,1]$ for $r=1,2,3,4,5,6,7$. This density then needs to be copied to each ray of the $r$-star. To get probability densities on $[0,1]$, one needs to multiply the densities by $r$. This would give the same density functions as for the type I polynomials (see Fig. \ref{figure:AZD}). Therefore, the order of the singularities will be the same as for the type I polynomials.  
This shows again the typical behavior of zeros for an Angelesco system, where the zeros on all the other intervals $[0,\omega^{j-1}]$, $j=2,3,...,r$ 
push the zeros on $[0,1]$ to the right.
\begin{figure}[t]
\centering 
\includegraphics[scale=0.35]{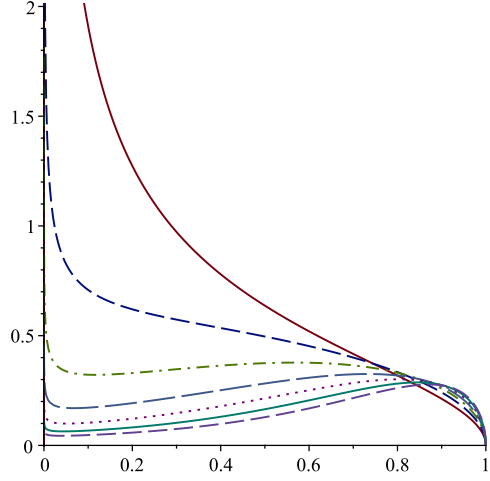}
\caption{The density $u_r$ of the asymptotic zero distribution on $[0,1]$ for $r=1$ (solid in red), $r=2$ (dash in blue), $r=3$ (dash-dot), 
$r=4$ (long dash), $r=5$ (dots), $r=6$ (solid in green) and $r=7$ (dash in purple).}
\label{figure:AZDII}
\end{figure}

\section*{Acknowledgements}
This work was supported by FWO research project G.086416N and EOS project PRIMA 30889451.

\end{document}